\documentclass[11pt]{article}
\usepackage[utf8]{inputenc} 
\usepackage[T1]{fontenc}    
\usepackage{hyperref}       
\usepackage{url}            
\usepackage{microtype}
\usepackage{graphicx}
\usepackage{subcaption}
\usepackage{array}
\usepackage{color} 
\usepackage{tabularx}
\usepackage{amsmath,mathdots,amsthm}
\usepackage{amssymb}
\usepackage{amsfonts}
\usepackage{txfonts}
\usepackage{multicol}
\usepackage{arxiv}
\usepackage[dvipsnames]{xcolor}
\usepackage{bm}
\usepackage{soul}
\usepackage{float}
\usepackage{cite}
\usepackage[colorinlistoftodos]{todonotes}
\usepackage[dvipsnames]{xcolor}
\usepackage{enumitem} 
\usepackage{tikz}
\usetikzlibrary{calc} 

\hypersetup{
	colorlinks=true,                          
	linkcolor=blue, 
	citecolor=blue, 
	urlcolor=black  
}

\newtheorem{theorem}{\bf Theorem}[section]

\newtheorem{rmrk}{Remark}
\newtheorem*{rmrk*}{Remark}

\newtheorem*{definition*}{Definition}

\newtheorem*{thm*}{Theorem}

\newtheorem{lmm}{Lemma}
\newtheorem*{lmm*}{Lemma}

\newtheorem{prp}{Proposition}
\newtheorem*{prp*}{Proposition}

\newtheorem{prop}{Property}
\newtheorem*{prop*}{Property}

\DeclareMathAlphabet{\mathcal}{OMS}{cmsy}{m}{n}
\SetMathAlphabet{\mathcal}{bold}{OMS}{cmsy}{b}{n}
\newcommand{\bigO}{\mathcal{O}}

\newcommand{\nrm}[1]{ \left\vert\left\vert #1 \right\vert\right\vert }

\newcommand{\nrmLp}[1]{\left\| #1 \right\|_{L^2}}
\newcommand{\pd}[2]{\frac{\partial #1}{\partial #2}}
\newcommand{\prn}[1]{\left( #1 \right)}
\newcommand{\pdt}[1]{\frac{\partial #1}{\partial t}}

\newcommand{\half}{\frac{1}{2}}
\newcommand{\curl}{\boldsymbol{\omega}}
\renewcommand{\div}{\nabla\cdot}
\newcommand{\T}{^\mathsf{T}}

\renewcommand{\d}{\,\mathrm{d}}

\renewcommand{\L}{\mathcal{L}}
\newcommand{\I}{\mathbf{I}}

\newcommand{\x}{\mathbf{x}}

\newcommand{\en}{\mathcal{E}}
\newcommand{\toten}{\mathrm{E}}

\newcommand{\V}{\mathcal{V}}
\newcommand{\rot}{\nabla\times}
\newcommand{\bn}{\mathbf{n}} 
\newcommand{\bx}{\mathbf{x}} 
\newcommand{\nablacp}{\nabla_c^p}
\newcommand{\nablapc}{\nabla_p^c}
\newcommand{\rotcp}{\nablacp\times}
\newcommand{\rotpc}{\nablapc\times}
\newcommand{\divcp}{\nablacp\cdot}
\newcommand{\divpc}{\nablapc\cdot}
\newcommand{\J}{\mathbf{J}} %
\newcommand{\Js}{\J^{\star}} %
\newcommand{\ppsi}{\boldsymbol{\psi}}
\newcommand{\ppsis}{\ppsi^{\star}}
\newcommand{\A}{\mathbf{A}} 
\newcommand{\B}{\mathbf{B}} 
\newcommand{\Q}{\mathbf{Q}} 
\newcommand{\RR}{\mathbb{R}}

\newcommand{\wpJ}{\curl_p^{\J}}
\newcommand{\wcpsi}{\curl_c^{\ppsi}}

\newcommand{\wpJn}{\curl_p^{\J^n}}
\newcommand{\wpJnp}{\curl_p^{\J^{n+1}}}
\newcommand{\wpJnph}{\curl_p^{\J^{n+\frac12}}}
\newcommand{\wcpsin}{\curl_c^{\ppsi^n}}
\newcommand{\wcpsinp}{\curl_c^{\ppsi^{n+1}}}
\newcommand{\wcpsinph}{\curl_c^{\ppsi^{n+\frac12}}}

\newcommand{\Ediss}{\mathrm{D}^{n+\frac12}}
\newcommand{\dissipterm}{\displaystyle\dfrac{\Delta t}{\tau}\sum_{c\in\Omega} |\Omega_c|\nrm{\J_c^{n+\half}}^2}
\newcommand{\Edisscurl}{\mathrm{D}_{\curl}^{n+\frac12}}
\newcommand{\dissiptermcurl}{\displaystyle\frac{\Delta t}{\tau} \sum_{p\in\Omega}|\Omega_p| \nrm{\wpJnph}^2}
\newcommand{\Edissdiv}{\mathrm{D}_{\nabla}^{n+\frac12}}
\newcommand{\dissiptermdiv}{\displaystyle\frac{\Delta t}{\tau} \sum_{p\in\Omega} |\Omega_p|\nrm{\divpc \J_c^{n+\half}}^2}

\title{Analysis and structure-preserving discretization of the heat-GLM system}

\author{
	Firas Dhaouadi$^a$, Laura R\'io-Mart\'in$^{b}$, Michael Dumbser$^c$ 
}

\affiliation{
	$^a$ Bordeaux INP, Institut de Math\'ematiques de Bordeaux, Universit\'e de Bordeaux, CNRS UMR 5251, INRIA, 33405, Talence, France\\
	$^b$ INRIA, Universit\'e de Bordeaux, Institut de Math\'ematiques de Bordeaux, CNRS UMR 5251, 33405, Talence, France \\
	$^c$ Department of Civil, Environmental and Mechanical Engineering, University of Trento, 38123, Trento, Italy.
}
\date{}

\renewcommand{\headeright}{Dhaouadi, R\'io-Mart\'in, Dumbser}
\renewcommand{\undertitle}{}
\renewcommand{\shorttitle}{}

\hypersetup{
	pdftitle={Analysis and structure-preserving discretization of the heat-GLM system},
	pdfsubject={q-bio.NC, q-bio.QM},
	pdfauthor={Firas Dhaouadi, Laura del Rio Martin, Michael Dumbser},
	pdfkeywords={First keyword, Second keyword, More},
}

\begin{document}
	\maketitle

	\begin{abstract}
		In this paper, we study a prototype hyperbolic system arising from the coupling of Generalized-Lagrangian-Multiplier (GLM) curl-cleaning with linear acoustics. We first show that, in the absence of dissipation, the equations can be rigorously derived from an underlying variational principle and that the resulting system is symmetric-hyperbolic. Further analysis shows that it not only conserves the total energy but also admits a set of invariants consisting of quadratic combinations of differential operators applied to the state variables. The addition of a stiff relaxation source term allows the system to be extended to dissipative dynamics, describing, for example, Cattaneo-type heat transfer processes that are asymptotically compatible with the Fourier law in the stiff relaxation limit; in this case, the total energy and invariants are dissipated accordingly.
		A new semi-implicit compatible numerical scheme on staggered grids is developed to solve this system while exactly preserving its key properties at the discrete level. In particular, we prove that the scheme is asymptotic-preserving in the Fourier limit, with a convergence rate depending on the initial data. We also show that the scheme conserves exactly the total energy as well as all the invariants in the absence of relaxation, and dissipates them consistently in its presence. These findings are demonstrated on a set of representative test cases.
	\end{abstract}

	\keywords{Hyperbolic equations\and structure-preserving schemes\and asymptotic-preservation\and Energy consistency}
	\section{Introduction}
	
	Many physically relevant models, such as Maxwell's equations~\cite{maxwell1865viii}, magnetohydrodynamics (MHD)~\cite{alfven1942existence}, and various systems of continuum mechanics and physics, possess involutions, that is, differential constraints on specific fields (typically of the divergence or rotational type) that are automatically preserved by the continuous PDE whenever they hold initially. In numerical simulations, however, there is no guarantee that a discretization will preserve these constraints, which can usually lead to unphysical or unstable numerical solutions. This classical issue has spurred the development of specialized numerical techniques that allow the enforcement or control of such constraints, see for example \cite{yee1966numerical,holland1983finite,brecht1981simulation,evans1988simulation,devore1991flux,dai1998simple,toth2000b,gardiner2005unsplit,balsaraSpicer1999,balsara2004,xu2016divergence,hazra2019globally,balsara2020making,balsara2023CurlFree}. 
	
	Among these methodologies, the Generalized-Lagrangian Multiplier (GLM) technique is a rather original approach that appeared in the pioneering work of Munz~\textit{et al.} on Maxwell's equations~\cite{munz2000divergence,dedner2002,dedner2003divergence}, as a practical strategy to control numerical divergence errors. The originality of the approach lies in the fact that the continuous PDE system itself is modified in order to tackle a numerical issue. In particular, an artificial field is introduced to advect any spurious divergence errors to the boundaries, thus preventing their accumulation. This idea was later generalized to curl-cleaning methods~\cite{dumbser2020glm,chiocchetti2021SurfaceTension,busto2021HyperbolicDispersion,riomartin2024ADERDG}.
	
	The effectiveness of GLM-curl cleaning has been documented in the literature. Indeed, independent numerical experiments indicate that it can prevent the growth of nonphysical modes that may otherwise lead to finite-time blow-up of the discrete solution~\cite{chiocchetti2021SurfaceTension,dhaouadi2023SPNSK}.In particular, in \cite{chiocchetti2021SurfaceTension}, simulations are shown to blow up immediately in the absence of curl-cleaning or any discrete curl-preserving techniques. This may be due to the fact that GLM-curl cleaning also has the remarkable advantage of restoring strong hyperbolicity in multiple space dimensions for a wide class of PDEs as shown for example in \cite{chiocchetti2021SurfaceTension,busto2021HyperbolicDispersion,riomartin2024ADERDG,dhaouadi2022NSK}. This occurs by converting the defective characteristic field associated with the continuum velocity into additional propagating wave families governed by the cleaning wave speed, yielding a complete eigensystem in multiple dimensions. 
	
	Nevertheless, existing results mainly rely on numerical experimentation and typically report a decrease in the magnitude of curl errors as the cleaning speed increases. However, there does not seem to be any established convergence rate or theoretical proof among published works~\cite{dhaouadi2022NSK,busto2021HyperbolicDispersion,chiocchetti2021SurfaceTension}.  
		
	In this context, the current contribution rigorously addresses this phenomenon at both the continuous and discrete levels, while also shedding light on an original system of PDEs coupling a classical Cattaneo-type heat conduction system with GLM-curl cleaning, which we shall refer to as the heat-GLM system. We demonstrate that the governing equations, while seemingly simple, exhibit a rich variety of nontrivial structural properties. These include symmetric $t-$hyperbolicity in the sense of Friedrichs, curl-free and divergence-free involutions, asymptotic compatibility with Fourier's law of heat conduction, as well as the existence of non-trivial invariants in addition to the total energy of the system. All these features make the model particularly appealing as a prototype system for benchmarking general structure-preserving numerical methods for hyperbolic PDEs.   
	
	Indeed, while structure-preserving numerical methods have been around for more than eighty years, dating from the work of Yee \cite{yee1966numerical} for the Maxwell equations, they continue to evolve through many contributions over the years, some of which are listed here as examples, without pretending to be exhaustive: constrained-transport methods for divergence-free magnetic fields, \cite{balsaraSpicer1999,balsara2004,balsara2020making,balsara2023CurlFree}, exactly curl-preserving schemes on staggered grids \cite{boscheri2021structure,dhaouadi2023SPNSK,riomartin2025curlfree,chiocchetti2023exactly}, also with a stronger accent on thermodynamics \cite{boscheri2024new,boscheri2026structure,boscheridhaouadi2026heat}, general frameworks embedding div-curl-grad constraints into discontinuous Galerkin and arbitrary-mesh discretizations \cite{perrier2025development,abgrall2025simple,abgrall2026embedding}, and other recent contributions \cite{jung2024curl,bernardelli2026structure,barsukow2025structure,ranocha2025structure}.
	
	In the current contribution, we construct a semi-implicit fully discrete scheme that provably preserves, at the discrete level, all the above-mentioned structural properties admitted by the considered equations. The scheme generalizes the approach presented in \cite{dumbser2025maxwellGLM}, which is closely related to the general multidimensional methods developed in \cite{abgrall2025simple,abgrall2026embedding}, and extends it to the case where a relaxation source term is also present. 
	The spatial discretization is based on a staggered grid in which the system variables whose time-evolution is governed by an Euler-Lagrange equation are stored in the vertices of the primal mesh, while the rest of the variables are stored in the corresponding cell centers, allowing us to define compatible discrete gradient, divergence, or curl operators. By compatible, we mean that the identities $\nabla\cdot(\nabla\times\cdot)=0$ and $\nabla\times\nabla(\cdot)=0$ also hold at the discrete level. 
	A Crank--Nicolson method is used for the time discretization. In this case, this not only allows for an unconditionally $L_2$-stable second-order scheme but also conserves all the considered invariants of the system. In the case where the dissipative source term is also present, we also show that some of these invariants become Lyapunov functionals at the continuous level and are also dissipated consistently at the discrete level.
	Finally, we also prove that the fully discrete scheme is asymptotic-preserving in the Fourier limit, with a convergence rate that depends on the initial data. 
	
	The paper is organized as follows. In Section~\ref{sec:gov_eq}, we introduce and analyze the structural properties of the heat--GLM system in both the dissipative and dissipationless cases. In particular, we show how to recover the dissipationless part of the system from an underlying variational principle, using a Lagrangian depending on a set of carefully chosen auxiliary variables. We then demonstrate that the system is symmetric hyperbolic and that adding a relaxation source term results in a dissipative system that asymptotically reduces to a classical parabolic heat conduction law, for a particular scaling of the relaxation time with the wave speed. 
	Section~\ref{sec:cont_constraints} is devoted to the analysis of the conservation of some quadratic quantities at the continuous level, including total energy, rotational energy, and some wave-equation invariants. In Section~\ref{sec:num_discretization}, we describe the staggered semi-implicit scheme, define the compatible discrete operators, and prove its structure-preserving properties at the discrete level. Finally, Section~\ref{sec:num_results} reports numerical simulations that illustrate the accuracy of the method, verify the theoretical convergence rates, and confirm the discrete conservation and dissipation properties.
	
	\section{Governing equations}\label{sec:gov_eq}
	\subsection{The (reversible) heat-GLM system}\label{sec:reversible_sys}
	Consider the following system of equations
	\begin{subequations}
		\begin{align}
			& \pdt{T} + c_0\, \nabla \cdot\J = 0, \label{eq:T} \\
			& \pdt{\J} +c_0\nabla T + c_h \rot \ppsi = 0, \label{eq:J} \\
			& \pdt{\ppsi}- c_h\rot \J + c_h \nabla \varphi = 0, \label{eq:psi} \\
			& \pdt{\varphi} + c_h \nabla \cdot \ppsi = 0. \label{eq:phi}
		\end{align}
		\label{eq:heatGLM}
	\end{subequations}
	All the evolved quantities are assumed to be functions of $(\mathbf{x},t) \in \Omega\times[0,t_f]\subset \RR^d \times \mathbb{R}_+$, where $d$ is the number of space dimensions and $t_f>0$ is set. We shall refer to this system as the heat-GLM system. Its structure is reminiscent, and indeed formally analogous to the so-called Maxwell--Munz equations \cite{munz2000divergence,dumbser2025maxwellGLM}, even though they differ in purpose and physical meaning. Indeed, the latter was designed to propagate divergence errors produced by the magnetic and electrical fields, and the former propagates curl errors of $\J$ and $\ppsi$. Beyond this analogy, the system remains general and several classical models are embedded within the present formulation as special cases. For example, setting $\varphi = T = 0$ with initial divergence-free $\J$ recovers Maxwell's equations, while setting $\ppsi = 0, \varphi = 0$ with initial curl-free $\J$ reduces to linear acoustics. 
	
	In what follows, we will refer to $T\in\RR$ as the temperature field and to $\J$ as the heat flux (in some scaling), even though these definitions only hold meaning in the presence of suitable relaxation source terms which we introduce later. The variables  $\varphi\in\RR$ and $\ppsi\in\RR^d$ are the auxiliary GLM fields. The constants $c_0\in\RR$ and $c_h\in \RR$ are constant wave speeds.   
	In this setting, the system \eqref{eq:heatGLM} describes the wave propagation of the scalar fields $T,\varphi\in\RR$ and their respective normalized fluxes $\J,\ppsi\in\RR^d$. The coupling between the $(T,\J)$ and $(\varphi,\ppsi)$ subsystems is only incorporated through the rotational terms in the equations (\ref{eq:J}-\ref{eq:psi}). In particular, if both $\J$ and $\ppsi$ are initially curl-free, Proposition~\ref{prop:curl_energy} below shows that they remain so for all times, and the two subsystems then evolve independently. 
	
	\subsection{Derivation from a variational principle}
	System \eqref{eq:heatGLM} can be derived from Hamilton's principle of stationary action, from a Lagrangian expressed in terms of auxiliary potentials. Indeed, let us consider $Z(\x,t)\in\RR$ and $\A(\x,t)\in\RR^d$ such that 
	\begin{subequations}
		\begin{align}
			&T = -Z_t\, , \label{def:T} \\
			&\ppsi = -\A_t\, ,   \label{def:psi}
		\end{align}
		\label{def:T_psi}
	\end{subequations}
	and let us build the following Lagrangian density
	\begin{equation}
		\Lambda\prn{\nabla Z, Z_t,\A_t,\nabla\cdot\A,\nabla\times\A} = \half\prn{Z_t^2+ \nrm{\A_t}^2 - \nrm{ c_0\nabla Z + c_h \rot \A}^2  \, - c_h^2\prn{\nabla\cdot \A}^2}.
		\label{eq:Lagrangian}
	\end{equation}
	In this case, it is straightforward to obtain the Euler-Lagrange equations by considering the action 
	\begin{equation*}
		\mathcal{A}[Z,\A] = \int_{t_0}^{t_1} \int_\Omega \Lambda\prn{\nabla Z, Z_t,\A_t,\nabla\cdot\A,\nabla\times\A} \, \mathrm{d}\Omega, \qquad 0\leq t_0<t_1,
	\end{equation*}
	and applying directly Hamilton's principle to it, which yields for both the variations of $Z$ and $\A$, respectively
	\begin{subequations}
		\begin{align}
			\pdt{}\pd{\Lambda}{Z_t}  + \nabla\cdot \prn{\pd{\Lambda}{\nabla Z}} &= \pdt{Z_t} - c_0\, \nabla\cdot \prn{c_0\nabla Z + c_h \rot \A} = 0, \label{eq:EL-Z} \\
			\pdt{}\pd{\Lambda}{\A_t} - \rot \prn{\pd{\Lambda}{\nabla \times \A}} + \nabla \prn{\pd{\Lambda}{\nabla \cdot \A}} & = \pdt{\A_t}+  c_h \rot \prn{c_0\, \nabla Z + c_h \rot \A } - c_h^2 \, \nabla \prn{\nabla \cdot \A} = 0.
			\label{eq:EL-A}
		\end{align}
	\end{subequations}
	Now, in order to cast these equations into first-order in both space and time, we make use of the definitions \eqref{def:T_psi}, and we additionally define the new fields $\J$ and $\varphi$ as  
	\begin{subequations}
		\begin{gather}
			\J = c_0\nabla Z + c_h \rot \A, \label{def:J} \\
			\varphi = c_h \nabla\cdot \A.   \label{def:phi}
		\end{gather}
		\label{def:J_phi}
	\end{subequations}
	Deriving both sides of these equations with respect to time and using the definitions \eqref{def:T_psi} allows us to obtain
	\begin{subequations}
		\begin{gather}
			\pdt{\J} = -c_0\nabla T - c_h \rot\ppsi, \\
			\pdt\varphi = -c_h \nabla\cdot \ppsi, 
		\end{gather}
	\end{subequations}
	and which correspond exactly to \eqref{eq:J} and \eqref{eq:phi}, respectively. The remaining equations, \textit{i.e.}, \eqref{eq:T} and \eqref{eq:psi}, simply correspond to the Euler-Lagrange equations \eqref{eq:EL-Z} and \eqref{eq:EL-A}, respectively, upon substituting the definitions \eqref{def:T_psi} in them. Therefore, under these notations, one recovers exactly the reversible heat-GLM system~\eqref{eq:heatGLM} as two pairs of Euler-Lagrange equations, each coming with a so-called \textit{trivial consequence equation}, obtained from the order-reduction definitions \eqref{def:J_phi}. 
	\begin{rmrk}
		The definition of the temperature field as the time derivative of an auxiliary scalar field $Z$, usually referred to as the thermal displacement, is recurrent in the literature and dates back to the works of Helmholtz \cite{Helmholtz1884}, having been subsequently developed, for example, in~\cite{taub1949hamilton,herivel1955derivation,greennaghdi1991,green1993thermoelasticity}, and more recently in~\cite{peshkov2018continuum,dhaouadi2024eulerian,gaybalmaz2025}.
	\end{rmrk}
	
	\subsection{Dissipative extension and Fourier limit}\label{sec:dissipative_sys}
	In this part, we extend the model \eqref{eq:heatGLM} to the case where dissipation is supplied to the system as an algebraic source term, so that the new governing equations are
	\begin{subequations}
		\begin{align}
			& \pdt{T} + c_0\, \nabla \cdot\J = 0, \label{eq:T2} \\
			& \pdt{\J} +c_0\nabla T +  c_h \rot \ppsi = -\frac{1}{\tau} \J, \label{eq:J2} \\
			& \pdt{\ppsi}- c_h\rot \J +  c_h \nabla \varphi = 0, \label{eq:psi2} \\
			& \pdt{\varphi} + c_h \nabla \cdot \ppsi = 0. \label{eq:phi2}
		\end{align}
		\label{eq:CattaneoGLM}
	\end{subequations}
	The parameter $\tau>0$ is a constant relaxation time. The subsystem (\ref{eq:T2}-\ref{eq:J2}), omitting the curl-cleaning coupling term, is usually referred to as the Cattaneo-Vernotte system for relativistic heat conduction \cite{cattaneo1948,cattaneo1958form,vernotte1958paradoxes}, and allows heat transfer processes to be described by a finite-speed wave motion. For the particular scaling $\tau = \kappa/c_0^2$ where $\kappa>0$ is the thermal conductivity, the Cattaneo-Vernotte system is known to be asymptotically consistent with the Fourier heat equation in the relaxation limit $c_0\to+\infty$, similarly as in kinetic theory, for example \cite{liu1987hyperbolic,Cercignani1988}. We briefly show here that this asymptotic consistency actually holds regardless of the choice of the cleaning term. We start by proving the following lemma 
	\begin{lmm}
		\label{lmm:Fourier}
		Assume that initial data for $(T,\J)$ verify
		\begin{equation}
			T(\bx, 0) = T_0(\bx) \in H^3(\Omega), \qquad
			\nabla\cdot\J(\bx, 0) = \nabla\cdot\J_0(\x) = -\frac{\kappa}{c_0}\Delta T_0(\bx) + \frac{s_0(\bx)}{c_0^2},
			\label{eq:WP_IC}
		\end{equation}
		where $s_0 \in H^1(\Omega)$ is arbitrary, and the boundary conditions are such that
		\begin{equation}
			\int_{\partial \Omega} \pdt{\nabla\cdot\J}\,\pd{\nabla\cdot \J}{\bn} \, \d S =0, \ \forall t\geq0.
			\label{eq:WP_BC}
		\end{equation}
		Then, solutions of \eqref{eq:CattaneoGLM}, satisfy
		\begin{equation*}
			\left\|\nabla \cdot \partial_t \J(\cdot,t)\right\|_{L^2}^2
			+ c_0^2 \left\|\nabla\,\nabla\cdot\J(\cdot,t)\right\|_{L^2}^2
			\leq 2\kappa^2 \left\|\nabla \Delta T_0\right\|_{L^2}^2 + \frac{1}{\kappa^2}\|s_0\|_{L^2}^2 + 2\|\nabla s_0\|_{L^2}^2,
		\end{equation*}
		uniformly in $c_0 \ge 1$ and $t\geq 0$. In particular, for $s_0 \equiv 0$, this reduces to 
		\begin{equation*}
			\left\|\nabla \cdot \partial_t \J(\cdot,t)\right\|_{L^2}^2
			+ c_0^2 \left\|\nabla\,\nabla\cdot\J(\cdot,t)\right\|_{L^2}^2
			\leq \kappa^2 \left\|\nabla \Delta T_0\right\|_{L^2}^2.
		\end{equation*}
	\end{lmm}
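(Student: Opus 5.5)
The plan is to reduce the statement to an energy estimate for a scalar damped wave equation satisfied by $u:=\nabla\cdot\J$, and then to evaluate that energy at $t=0$ using the prescribed initial data \eqref{eq:WP_IC}, with $\tau=\kappa/c_0^2$ as in the Fourier scaling.

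\textbf{Step 1: the equation for $u$.} First take the divergence of \eqref{eq:J2}. Since $\nabla\cdot(\rot\ppsi)=0$, the cleaning field $\ppsi$ drops out, leaving
\begin{equation*}
\partial_t u + c_0\Delta T = -\frac{1}{\tau}u .
\end{equation*}
Next differentiate this identity in time and use \eqref{eq:T2}, i.e. $\partial_t T=-c_0 u$. This gives the telegraph equation
\begin{equation*}
\partial_{tt}u - c_0^2\Delta u + \frac{1}{\tau}\partial_t u = 0 .
\end{equation*}
This manipulation is where the independence from the GLM coupling enters.

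\textbf{Step 2: energy dissipation.} Define
\begin{equation*}
E(t)=\|\partial_t u\|_{L^2}^2 + c_0^2\|\nabla u\|_{L^2}^2 .
\end{equation*}
Multiply the telegraph equation by $2\partial_t u$ and integrate over $\Omega$, integrating the Laplacian term by parts. This yields
\begin{equation*}
\frac{\d E}{\d t} = -\frac{2}{\tau}\|\partial_t u\|_{L^2}^2 + 2c_0^2\int_{\partial\Omega}\partial_t u\,\pd{u}{\bn}\,\d S .
\end{equation*}
The boundary integral vanishes exactly by assumption \eqref{eq:WP_BC}. Hence $E$ is nonincreasing and $E(t)\le E(0)$ for all $t\ge0$, which is the left-hand side of the claim.

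\textbf{Step 3: evaluating $E(0)$.} This is the only place where the specific structure of the initial data matters, and it is the main point of the lemma.
\begin{itemize}
\item From Step 1 with $1/\tau=c_0^2/\kappa$,
\begin{equation*}
\partial_t u(\cdot,0) = -c_0\Delta T_0 - \frac{c_0^2}{\kappa}\Bigl(-\frac{\kappa}{c_0}\Delta T_0+\frac{s_0}{c_0^2}\Bigr) = -\frac{s_0}{\kappa}.
\end{equation*}
The $O(c_0)$ terms cancel, which is why \eqref{eq:WP_IC} is a ``well-prepared'' condition. This gives $\|\partial_t u(0)\|_{L^2}^2=\|s_0\|_{L^2}^2/\kappa^2$.
\item Also $c_0\nabla u(0) = -\kappa\nabla\Delta T_0 + \nabla s_0/c_0$. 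By $(a+b)^2\le 2a^2+2b^2$ and $c_0\ge1$,
\begin{equation*}
c_0^2\|\nabla u(0)\|_{L^2}^2 \le 2\kappa^2\|\nabla\Delta T_0\|_{L^2}^2 + 2\|\nabla s_0\|_{L^2}^2 .
\end{equation*}
\item When $s_0\equiv0$, no splitting is needed, and $E(0)=\kappa^2\|\nabla\Delta T_0\|_{L^2}^2$ exactly.
\end{itemize}
The regularity $T_0\in H^3$ and $s_0\in H^1$ is precisely what makes these quantities finite.

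\textbf{Main obstacle.} The computation itself is routine once the telegraph equation is in hand. The delicate point is justifying the differentiations and the integration by parts, which requires enough regularity of $(T,\J)$. I would handle this formally, or by a density argument with smooth data, since $E(t)\le E(0)$ passes to the limit.
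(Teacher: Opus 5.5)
Your proposal is correct and follows essentially the same route as the paper. You derive the damped wave equation for $u=\nabla\cdot\J$; the paper applies $\partial_t$ before the divergence and you do the reverse, which makes no difference. You then use the energy decay, with the boundary term removed by \eqref{eq:WP_BC}, and evaluate $E(0)$ from the divergence of \eqref{eq:J2} at $t=0$ with $\tau=\kappa/c_0^2$. This gives $\partial_t u(\cdot,0)=-s_0/\kappa$ and the same $(a+b)^2\le 2a^2+2b^2$ bound on $c_0^2\|\nabla u(\cdot,0)\|_{L^2}^2$.
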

	\begin{proof}
		Taking $\pdt{} \eqref{eq:J2} - c_0\nabla \eqref{eq:T2}$  yields 
		\begin{equation*}
			\pd{^2\J}{t^2} + \frac{1}{\tau}\pdt{\J} - c_0^2\,\nabla(\nabla\cdot\J) + c_h \pdt{\rot \ppsi} = 0.
		\end{equation*}
		Applying a divergence operator to the latter and denoting $u = \nabla\cdot\J$ yields the damped wave equation
		\begin{equation*}
			\pd{^2u}{t^2}+ \frac{1}{\tau}\pdt{u} - c_0^2\,\Delta u = 0,
		\end{equation*}
		which satisfies the energy decay law
		\begin{equation*}
			\frac{\partial}{\partial t}\prn{\frac{1}{2}\prn{\pdt{u}}^2 + \frac{c_0^2}{2}\|\nabla u\|^2}
			- \nabla\cdot\prn{c_0^2\pdt{u}\nabla u} = -\frac{1}{\tau}\prn{\pdt{u}}^2 \leq 0.
		\end{equation*}
		Under boundary conditions verifying \eqref{eq:WP_BC}, one obtains the global estimates 
		\begin{equation*}
			\int_\Omega \left(\prn{\nabla\cdot\partial_t\J(\x,t)}^2 + c_0^2\,\|\nabla\,\nabla\cdot\J(\x,t)\|^2 \right) \d\Omega 
			\leq \int_\Omega \left( \prn{\nabla\cdot\partial_t\J(\x,0)}^2 + c_0^2\,\|\nabla\,\nabla\cdot\J(\x,0)\|^2 \right) \d\Omega,
		\end{equation*}
		where, evaluating $\nabla\cdot\eqref{eq:J2}$ at $t=0$ allows us to recast the right-hand side as
		\begin{align*}
			\int_\Omega &\prn{ \prn{\nabla\cdot\partial_t\J(\x,t)}^2 + c_0^2\,\|\nabla\,\nabla\cdot\J(\x,t)\|^2 } \d\Omega 
			\leq c_0^2\int_\Omega \prn{\Delta T(\x,0) +\frac{c_0}{\kappa}\,\nabla\cdot\J(\x,0)}^2d\Omega + c_0^2 \int_\Omega \|\nabla\,\nabla\cdot\J(\x,0)\|^2 \d\Omega.
			\label{eq:energy}
		\end{align*}
		A sufficient condition to obtain a uniform bound is to prescribe an initial condition for $\J$ verifying
		\begin{equation}
			\nabla\cdot\J(\x,0)= -\frac{\kappa}{c_0}\Delta T_0(\x) + \frac{s_0(\x)}{c_0^2}, \qquad \text{where} \quad s_0(\x) \in  H^1(\Omega).
			\label{eq:IC_J}
		\end{equation}
		In this case, one obtains 
		\begin{align*}
			c_0^2\int_\Omega \prn{\Delta T_0(\x) + \frac{c_0}{\kappa}\nabla\cdot\J_0(\x)}^2\d\Omega 
			&= \frac{1}{\kappa^2} \| s_0\|_{L^2}^2, \\
			c_0^2 \int_\Omega \|\nabla\,\nabla\cdot\J_0(\x)\|^2 \d\Omega 
			&\leq 2\kappa^2\int_\Omega\|\nabla\Delta T_0(\x)\|^2\,\d\Omega 
			+ \frac{2}{c_0^2}\int_\Omega\|\nabla s_0(\x)\|^2\,\d\Omega \\
			&\leq 2\kappa^2\|\nabla\Delta T_0\|_{L^2}^2 + 2\|\nabla s_0\|_{L^2}^2,
		\end{align*}		
		so that finally
		\begin{align*}
			\nrmLp{\nabla\cdot\partial_t\J(\x,t)}^2 + c_0^2 \nrmLp{\nabla\,\nabla\cdot\J(\x,t)}^2	\leq 2\kappa^2\|\nabla\Delta T_0\|_{L^2}^2 + \frac{1}{\kappa^2} \|s_0\|_{L^2}^2 + 2\|\nabla s_0\|_{L^2}^2.
		\end{align*}
		In particular, for the simplest choice $s_0\equiv0$, the estimate improves to
		\begin{equation*}
			\nrmLp{\nabla\cdot\partial_t\J(\x,t)}^2 + c_0^2 \nrmLp{\nabla\,\nabla\cdot\J(\x,t)}^2
			\leq \kappa^2\|\nabla\Delta T_0\|_{L^2}^2.
		\end{equation*}
	\end{proof}
	The Fourier limit is then obtained as a direct consequence.
	\begin{prop}[Fourier limit]
		\label{prop:Fourier}
		For $\tau=\kappa/c_0^2$ and assuming well-prepared initial and boundary conditions satisfying \eqref{eq:WP_IC}-\eqref{eq:WP_BC}, smooth solutions of \eqref{eq:CattaneoGLM} 
		are asymptotically consistent with the Fourier heat equation in the sense that the residual
		\begin{equation*}
			\left\|
			\pdt{T}-\kappa\Delta T
			\right\|_{L^2(\Omega)}
			\to 0
			\qquad\text{as }\,\, c_0\to+\infty.
		\end{equation*}
	\end{prop}
	\begin{proof}		
		By taking $\kappa/c_0\div{\eqref{eq:J2}}$ and substituting $\tau = \kappa/c_0^2$, we obtain	
		\begin{equation*}
			\frac{\kappa}{c_0} \pdt{\prn{\div{\J}}} +\kappa\, \Delta T = -c_0 \div\J.
		\end{equation*}
		Substituting into \eqref{eq:T2} yields
		\begin{equation*}
			\pdt{T} - \kappa\, \Delta T = \frac{\kappa}{c_0} \pdt{\prn{\div{\J}}}.
		\end{equation*}
		By virtue of Lemma~\ref{lmm:Fourier},  there exists a constant $K\geq0$, independent of $c_0$ such that
		\begin{equation*}
			\nrmLp{\pdt{\prn{\div{\J}}}}^2
			\le
			\nrmLp{\pdt{\prn{\div{\J}}}}^2
			+c_0^2\nrmLp{\nabla\prn{\div{\J}}}^2
			\le
			K^2
		\end{equation*}
		uniformly in $c_0>0$ and $t\ge0$. Hence
		\begin{equation*}
			\left\|
			\pdt{T}-\nabla\cdot(\kappa\nabla T)
			\right\|_{L^2}
			\le
			\frac{\kappa}{c_0}K
			\xrightarrow{c_0\to+\infty}
			0.
		\end{equation*}
	\end{proof}
	\subsection{Symmetric-Hyperbolicity}
	The reversible heat-GLM system \eqref{eq:heatGLM} in three space dimensions can be cast in the form
	\begin{equation*}
		\pdt{\Q} + \sum_{i=1}^3 \B_i \pd{\Q}{x_i} = 0,
	\end{equation*}
	with $\Q$ the vector of main variables, taken in the order $(J_1,J_2,J_3,\varphi,\psi_1,\psi_2,\psi_3,T)^\mathrm{T}$. This allows for a simple and symmetric form of the $\B_i, i\in\{1,2,3\}$, which in this case write as 
	\begin{equation*}
		\B_i = \begin{pmatrix}
			\mathbf{O}_{4} & \mathbf{D}_i \\
			\mathbf{D}_i^\mathrm{T} & \mathbf{O}_{4}
		\end{pmatrix}, \quad \mathbf{D}_1 = \begin{pmatrix}
			0 & 0 & 0 & c_0\\
			0 & 0 & -c_h & 0 \\
			0 & c_h & 0 & 0 \\
			c_h & 0 & 0 & 0
		\end{pmatrix},
		\quad \mathbf{D}_2 = \begin{pmatrix}
			0 & 0 & c_h & 0\\
			0 & 0 & 0   & c_0 \\
			-c_h  & 0 & 0 & 0 \\
			0 & c_h & 0 & 0
		\end{pmatrix},
		\quad \mathbf{D}_3 = \begin{pmatrix}
			0 & -c_h & 0 & 0\\
			c_h & 0 & 0   & 0 \\
			0  & 0 & 0 & c_0 \\
			0 & 0 & c_h & 0
		\end{pmatrix}.
	\end{equation*}
	It follows that for every $(\beta_1,\beta_2,\beta_3)\in\RR^3$, the matrix $\B=\sum_i \beta_i \B_i$ is symmetric and hence the system of equations \eqref{eq:heatGLM} is symmetric $t-$hyperbolic in the sense of Friedrichs. The characteristic speeds in the $x-$direction, for example, are given by 
	\begin{equation*}
		\lambda _{1} = -c_0, \quad  
		\lambda _{2,3,4} = -c_h, \quad  
		\lambda _{5,6,7} = c_h,\quad  
		\lambda_{8} = c_0,
	\end{equation*}
	alongside the corresponding right eigenvectors of $\B_1$, gathered here as the columns of the matrix 
	\begin{equation*}
		\mathbf{R}_x = \left(
		\begin{array}{rrrrrrrr}
			-1 & 0 &  0 &  0 & 0 & 0 &  0 & 1 \\
			0 & 1 &  0 &  0 & 0 & 0 &  1 & 0 \\
			0 & 0 & -1 &  0 & 0 & 1 &  0 & 0 \\
			0 & 0 &  0 & -1 & 1 & 0 &  0 & 0 \\
			0 & 0 &  0 &  1 & 1 & 0 &  0 & 0 \\
			0 & 0 &  1 &  0 & 0 & 1 &  0 & 0 \\
			0 & 1 &  0 &  0 & 0 & 0 & -1 & 0 \\
			1 & 0 &  0 &  0 & 0 & 0 &  0 & 1 \\
		\end{array}
		\right).
	\end{equation*} 
	
	\section{Conserved and dissipated quantities}\label{sec:cont_constraints}
	We analyze in this section conserved quantities of the reversible system of equations \eqref{eq:heatGLM}, some of which become adequately dissipated when the relaxation source term is added, i.e. for system \eqref{eq:CattaneoGLM}. We also include remarks on the resulting involutions for the system.
	\subsection{Total energy}
	Expressing the Lagrangian density $\Lambda$ from~\eqref{eq:Lagrangian} in terms of $(T, \J, \ppsi, \varphi)$ via the definitions~\eqref{def:T_psi} and~\eqref{def:J_phi}, one obtains
	\begin{equation*}
		\L(T,\J,\ppsi,\varphi) = \half \prn{T^2 + \nrm{\ppsi}^2 - \nrm{\J}^2 - \varphi^2},
	\end{equation*}
	to which corresponds the total energy density
	\begin{equation}
		\en(T,\J,\ppsi,\varphi) = \half \prn{T^2 + \nrm{\J}^2 + \nrm{\ppsi}^2 + \varphi^2},
		\label{def:en}
	\end{equation}
	whose conservation is shown below.
	\begin{prop}[Non-increase of total energy]
		\label{prop:energy}
		
		The total energy
		\begin{equation*}
			\toten(t)
			=
			\int_{\Omega}
			\frac12\Bigl(
			T(\x,t)^2
			+\nrm{\J(\x,t)}^2
			+\nrm{\ppsi(\x,t)}^2
			+\varphi(\x,t)^2
			\Bigr)\,\d\Omega
		\end{equation*}
		satisfies, up to boundary terms,
		\begin{equation}
			\toten(t)-\toten(0)
			=
			-\frac1{\tau}
			\int_0^t \int_{\Omega}
			\nrm{\J(\x,s)}^2 \,\d\Omega\,\d s
			\le 0.
			\label{eq:en_balance}
		\end{equation}
		
		In particular:
		\begin{enumerate}
			\item[(a)] For finite $\tau>0$, the energy is non-increasing.
			\item[(b)] In the absence of relaxation $(1/\tau=0)$, the total energy is conserved.
		\end{enumerate}
		Consequently, the solution is uniformly bounded in
		$L^\infty\prn{0,t_f;L^2(\Omega)}$.
	\end{prop}
	
	\begin{proof}
		One first derives the evolution equation for the total energy density \eqref{def:en}, obtained by summing each of the equations (\ref{eq:T2}-\ref{eq:phi2}) multiplied by the corresponding entropic variable (equal here to the conserved variable), to obtain
		\begin{equation}
			\pdt{\en} + \nabla \cdot \prn{c_0\, T \J + c_h\, \ppsi \times \J + c_h \,\varphi\, \ppsi} = -\frac{1}{\tau} \nrm{\J}^2 \le 0.
			\label{eq:en_lyapunov}
		\end{equation}
		Integrating both sides over the domain $\Omega$ yields~\eqref{eq:en_balance}.
	\end{proof}
	\subsection{Rotational energy and curl involution}
	Let $\curl_\J = \rot \J$ and $\curl_{\ppsi} = \rot \ppsi$. We call the rotational energy density of the heat-GLM system the quantity
	\begin{equation*}
		e_{\curl} = \frac{1}{2} \prn{\nrm{\curl_\J}^2 + \nrm{\curl_{\ppsi}}^2},
	\end{equation*}
	for which we prove the following.  
	\begin{prop}[Non-increase of total rotational energy]
		\label{prop:curl_energy}
		The quantity $e_{\curl}$ obeys the balance law
		\begin{equation}
			\pdt{e_{\curl}}	+ \div\!\left(c_h\, \curl_{\ppsi} \times \curl_\J\right)		= -\frac{1}{\tau}\nrm{\curl_\J}^2 \leq 0,
			\label{eq:E_omega}
		\end{equation}
		As a result, the total rotational energy 	
		\begin{equation*}
			\toten_{\curl} = \int_\Omega \left( \frac{1}{2}\nrm{\curl_\J}^2 + \frac{1}{2}\nrm{\curl_{\ppsi}}^2 \right) \d\Omega,
		\end{equation*}
		satisfies, up to boundary terms,
		\begin{equation}
			\toten_{\curl}(t)-\toten_{\curl}(0)
			=
			-\frac1{\tau}
			\int_0^t \int_{\Omega}
			\nrm{\curl_\J(\x,s)}^2 \,\d\Omega\,\d s
			\le 0.
			\label{eq:E_omega_integral}
		\end{equation}
		In particular:
		\begin{enumerate}
			\item[(a)] For finite $\tau>0$, $\toten_{\curl}$ is non-increasing.
			\item[(b)] In the absence of relaxation $(1/\tau=0)$, $\toten_{\curl}$ is conserved. 
		\end{enumerate}
		As a consequence, the following involution holds for boundary conditions such that boundary terms vanish:
		\begin{align*}
			\text{if} \quad \begin{cases}
				\displaystyle
				\rot\J(\x,0)=0,\\
				\displaystyle
				\rot\ppsi(\x,0)=0,
			\end{cases}
			\quad \text{then} \qquad 
			\begin{cases}
				\displaystyle
				\rot\J(\x,t)=0,\\
				\displaystyle
				\rot\ppsi(\x,t)=0,
			\end{cases}
			\qquad \forall\, t>0.
		\end{align*}
	\end{prop}
	\begin{proof}
		Applying the curl operator to \eqref{eq:J2} and \eqref{eq:psi2} yields
		\begin{subequations}
			\begin{align}
				&\pdt{\curl_\J} + \frac{1}{\tau} \curl_\J + c_h\, \rot \curl_{\ppsi} = 0, \label{eq:curlJ_t}\\
				&\pdt{\curl_{\ppsi}} - c_h\, \rot \curl_{\J} = 0.\label{eq:curlpsi_t}
			\end{align}
			\label{eq:curl_evolution}
		\end{subequations}
		Multiplying~\eqref{eq:curlJ_t} by $\curl_\J$ and~\eqref{eq:curlpsi_t} by $\curl_{\ppsi}$, and summing them results in~\eqref{eq:E_omega}. Integrating the latter over space yields \eqref{eq:E_omega_integral}. 		
		\end{proof}
		\begin{rmrk}
			One can decouple the equations \eqref{eq:curl_evolution} by taking $\pdt{}\eqref{eq:curlJ_t} - c_h \nabla\times \eqref{eq:curlpsi_t}$ and $\pdt{}\eqref{eq:curlpsi_t} + c_h \nabla\times \eqref{eq:curlJ_t}$, which leads to both $\curl_\J$ and $\curl_{\ppsi}$ satisfying the same damped wave equation 
			\begin{equation*} 
			\pd{^2\curl}{t^2} + \frac1\tau \pd{\curl}{t}- c_h^2\, \nabla^2 \curl = 0. 
			\end{equation*} 
		\end{rmrk}
	
		\subsection{Wave-equation energies}\label{sec:waveeq_energy}
		Lastly, we show that the system \eqref{eq:CattaneoGLM} admits additional conserved/dissipated energies associated with the wave structure of the pairs $(T,\nabla\cdot\J)$ and $(\varphi,\nabla\cdot\ppsi)$. Indeed, this property arises as a direct consequence of the fact that $T$ and $\varphi$ satisfy respectively, damped/classical wave equations
		\begin{subequations}
			\begin{align}
				\pd{^2 T}{t^2} + \frac{1}{\tau}\pdt{T} - c_0^2 \Delta T &= 0,
				\label{eq:T_wave}
				\\
				\pd{^2 \varphi}{t^2} - c_h^2 \Delta \varphi &= 0,
				\label{eq:phi_wave}
			\end{align}
		\end{subequations}
		obtained by taking $\pdt{}\eqref{eq:T2} - c_0 \nabla\cdot \eqref{eq:J2}$ and $\pdt{}\eqref{eq:phi2} - c_h \nabla\cdot \eqref{eq:psi2}$, respectively. It follows naturally that the associated (scaled) energy densities
		\begin{equation*}
			e_T = \frac{1}{2}\prn{\frac{1}{c_0^2}\prn{\pdt{T}}^2+\nrm{\nabla T}^2},  \quad 
			e_\varphi = \frac{1}{2}\prn{\frac{1}{c_h^2}\prn{\pdt{\varphi}}^2+\nrm{\nabla \varphi}^2}, 
		\end{equation*}
		satisfy the balance laws
		\begin{align*}
			&\pdt{e_T}-\div\prn{\pdt{T}\,\nabla T}	
			=-\frac{1}{\tau c_0^2}\prn{\pdt{T}}^2, \\
			&\pdt{e_\varphi}	-\div\prn{\pdt{\varphi}\,\nabla \varphi}
			=0.
		\end{align*}
		Consequently, we obtain the following. 
		\begin{prop}[Wave-equation energies]\label{prop:wave_inv}
			Define the total wave energies
			\begin{equation*}
				\toten_T = \int_\Omega \frac{1}{2}\prn{\nrm{\nabla T}^2 + \prn{\nabla\cdot\J}^2} \d\Omega, \quad 
				\toten_\varphi 	= \int_\Omega \frac{1}{2}\prn{\nrm{\nabla \varphi}^2 + \prn{\nabla\cdot\ppsi}^2} \d\Omega.
			\end{equation*}
			Then, smooth solutions of \eqref{eq:CattaneoGLM} satisfy, up to boundary terms
			\begin{subequations}
				\begin{align}
					&\toten_T(t)-\toten_T(0) = -\frac1{\tau} \int_0^t \int_{\Omega}
					\prn{\nabla\cdot\J(\x,s)}^2 \,\d\Omega\,\d s \le 0. \label{eq:E_T_integral} \\
					&\toten_\varphi(t)-\toten_\varphi(0) = 0. \label{eq:E_phi_integral}
				\end{align}
			\end{subequations}
			In particular:
			\begin{enumerate}
				\item[(a)] For finite $\tau>0$, $\toten_T$ is non-increasing and $\toten_\varphi$ is conserved. 
				\item[(b)] In the absence of relaxation $(1/\tau=0)$, both $\toten_T$ and $\toten_\varphi$ are conserved. 
			\end{enumerate}
			As a consequence in both cases, the following involutions are satisfied for boundary conditions such that boundary terms vanish:
			\begin{subequations}
				\begin{align}
					&\text{if} \quad
					\begin{cases}
						\displaystyle
						\nabla T(\x,0)=0,\\
						\displaystyle
						\nabla\cdot\J(\x,0)=0,
					\end{cases}
					\quad \text{then} \qquad
					\begin{cases}
						\displaystyle
						\nabla T(\x,t)=0,\\
						\displaystyle
						\nabla\cdot\J(\x,t)=0,
					\end{cases}
					\qquad \forall\, t>0, \label{eq:div_J}\\
					\text{and} \ &\text{if} \quad
					\begin{cases}
						\displaystyle
						\nabla \varphi(\x,0)=0,\\
						\displaystyle
						\nabla\cdot\ppsi(\x,0)=0,
					\end{cases}
					\quad \text{then} \qquad
					\begin{cases}
						\displaystyle
						\nabla \varphi(\x,t)=0,\\
						\displaystyle
						\nabla\cdot\ppsi(\x,t)=0,
					\end{cases}
					\qquad \forall\, t>0. \label{eq:div_psi}
				\end{align}
			\end{subequations}
		\end{prop}
		\begin{proof}
			By virtue of \eqref{eq:T2}, we can rewrite both $e_T$ and its associated balance law as 
			\begin{equation*}
				e_T = \frac{1}{2}\prn{\nrm{\nabla T}^2 + \prn{\nabla\cdot\J}^2}, \qquad \pdt{e_T}+\div\prn{c_0\prn{\nabla\cdot\J}\nabla T}	
				=-\frac{1}{\tau}\prn{\nabla\cdot\J}^2.
			\end{equation*}
			Integrating the latter over space yields \eqref{eq:E_T_integral}. Therefore, if the boundary conditions preserve \eqref{eq:E_T_integral}, if $e_T=0$ initially, it will remain as such for later times, resulting in the involution \eqref{eq:div_J}. Analogous considerations lead to the same results for $(\varphi,\ppsi)$.      
		\end{proof}
		
		\section{Numerical scheme: notations and definitions }\label{sec:num_discretization}
		We denote by $\Omega$ the computational domain and consider a staggered primal–dual mesh configuration. The primal mesh consists of control volumes $\Omega_c$ with cell centers $\x_c$, while its dual consists of control volumes $\Omega_p$ with centers $\x_p$, obtained by connecting neighboring primal cell centers, as illustrated in Figure~\ref{fig:staggered_mesh}. We further denote the set of cell centers by $\V_c$ and the set of nodes by $\V_p$, with dimensions $N_c$ and $N_p$, respectively. Such a configuration allows the construction of compatible discrete operators, following the classical setting presented in \cite{HymanShashkov1997,Maire2007,Maire2008,Maire2009}. 
		
		\begin{figure}[!h]
			\centering
			\includegraphics[width=0.35\textwidth]{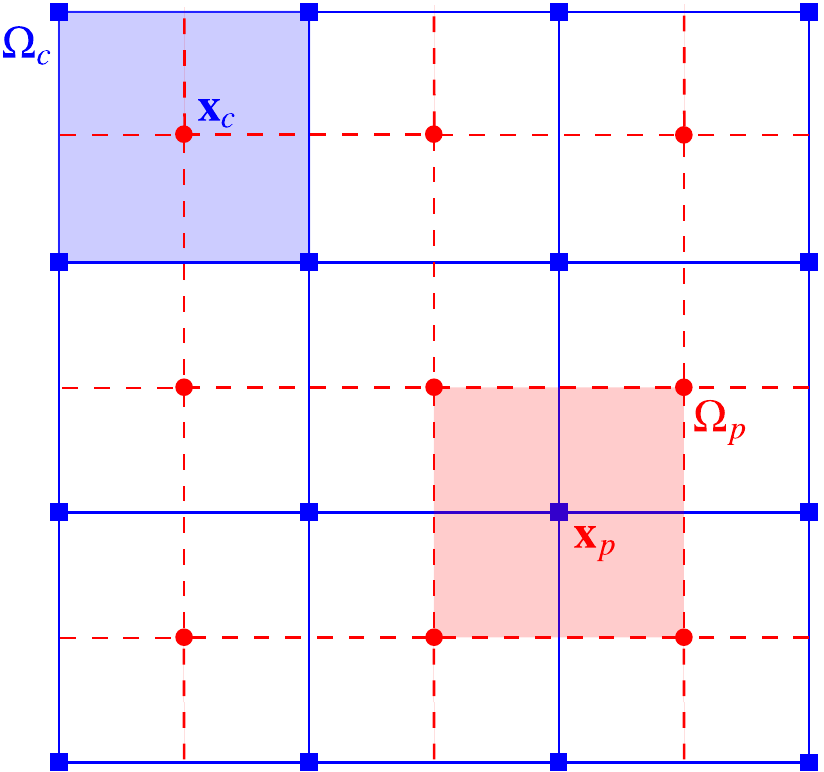}
			\caption{Sketch of the staggered grid, showing primal cells $\Omega_c$ (blue) with centers $\x_c$ and dual cells $\Omega_p$ (red) with centers $\x_p$.}
			\label{fig:staggered_mesh}
		\end{figure}
		At the core of these methods lie discrete nabla operators defined as weighted sums of corner vectors over the respective cell boundaries:
		\begin{align}
			&\nablacp = \frac{1}{|\Omega_c|} \sum_{p \in \Omega_c} l_{pc} \bn_{pc}, \label{eq:nablacp}  \\
			&\nablapc = \frac{1}{|\Omega_p|} \sum_{c \in \Omega_p} l_{pc} \bn_{cp},	\label{eq:nablapc} 
		\end{align} 	
		where $\bn_{cp} = -\bn_{pc}$. For each pair $(c,p)$ composed of a primal cell $\Omega_c$ and one of its vertices $\mathbf{x}_p$, we define the dual subcell $\Gamma_{pc} = \partial \Omega_p \cap \partial \Omega_{cp},$ where $\Omega_{cp}$ denotes the subcell associated with the pair $(c,p)$, obtained by joining the center of the primal cell $\bx_c$, the vertex $\bx_p$, and the midpoints of the edges of $\Omega_c$ incident to $p$. The quantity $l_{pc}\,\bn_{pc}$ is the \textit{corner vector} associated with the vertex $p$ and the primal cell $\Omega_c$, defined as
		\begin{equation*}
			l_{pc}\bn_{pc}=\int_{\Gamma_{pc}} \bn \, \mathrm{d} \ S ,
		\end{equation*}
		where $l_{pc}$ is the length of the segment $\Gamma_{pc}$ and $\bn_{pc}$ is the unit normal vector to $\Gamma_{pc}$ pointing outward of the dual cell $\Omega_p$.
		\begin{figure}[!h]
			\centering
			\begin{tikzpicture}[scale=1.5,>=stealth]
				\fill[blue!20] (0,0) rectangle (2,2);
				\draw[thick,blue] (0,0) rectangle (2,2);
				\node[blue] at (0.3,1.72) {$\Omega_c$};
				\node[blue,above left] at (1,1) {$\mathbf{x}_c$};
				\fill[red!20] (1,-1) rectangle (3,1);
				\draw[thick,red,dashed] (1,-1) rectangle (3,1);
				\node[red] at (2.7,-0.72) {$\Omega_p$};
				\node[red,below right] at (2,0) {$\mathbf{x}_p$};
				\fill[Periwinkle!60,opacity=0.6] (1,1) -- (1,0) -- (2,0) -- (2,1) -- cycle;
				\node[Periwinkle!60!black] at (1.3,0.8) {$\Omega_{pc}$};
				\draw[very thick,Plum!80!black] (1,0) -- (2,0) -- (2,1);
				\node[Plum!90!black] at (1.5,-0.2) {$\Gamma_{pc}$};
				\draw[->,very thick] (1.65,0.35) -- (1.95,0.05);
				\node[right] at (1.2,0.45) {$l_{pc}\mathbf n_{pc}$};
				\draw[blue,thick] (2,0) -- (3,0);
				\draw[blue,thick] (2,-1) -- (2,0);
				\draw[red,dashed,thick] (1,1) -- (1,2);
				\draw[red,dashed,thick] (0,1) -- (1,1);
				\fill[blue] (-0.05,-0.05) rectangle (0.05,0.05);
				\fill[blue] (-0.05,1.95) rectangle (0.05,2.05);
				\fill[blue] (1.95,-0.05) rectangle (2.05,0.05); 
				\fill[blue] (1.95,1.95) rectangle (2.05,2.05);
				\fill[red] (1,1) circle (1.2pt);	
				\fill[red] (1,-1) circle (1.2pt);	
				\fill[red] (3,1) circle (1.2pt);	
				\fill[red] (3,-1) circle (1.2pt);	
			\end{tikzpicture}
			\caption{Geometric meaning of $l_{pc}\mathbf{n}_{pc}$. The blue cell is the primal control volume $\Omega_c$. The red dashed cell is the dual control volume $\Omega_p$, centered at the primal vertex $\mathbf{x}_p$. $l_{pc}$ is the total length of the segment $\Gamma_{cp}$, and $\mathbf{n}_{pc}$ is the corresponding unit normal vector.}
			\label{fig:lpcnpc}
		\end{figure}
		
		Since the vectors $\{l_{pc}\bn_{pc}\}_{p\in\Omega_c}$ form a closed polygon, it is straightforward that, due to the Gauss theorem, the following equalities are verified:
		\begin{equation*}
			\sum_{p \in \Omega_c} l_{pc} \bn_{pc} = 0, \qquad 
			\sum_{c \in \Omega_p} l_{pc} \bn_{cp} = 0. 
		\end{equation*} 
		Let $\phi_p$ and $\phi_c$ be scalar fields defined at the vertices and centers of the primal mesh, respectively, and let $\A_p$ and $\A_c$ be vector fields defined at the vertices and centers of the primal mesh, respectively. We can define, using the definitions \eqref{eq:nablacp} and \eqref{eq:nablapc}, the following compatible discrete gradient, divergence, and curl operators:
		\begin{subequations}
			\begin{alignat}{2}
				&\nablacp \phi_p = \frac{1}{|\Omega_c|} \sum_{p \in \Omega_c} l_{pc} \bn_{pc}\, \phi_p, 
				&&\nablapc \phi_c = \frac{1}{|\Omega_p|} \sum_{c \in \Omega_p} l_{pc} \bn_{cp}\, \phi_c, \label{eq:compatible_grad} \\  	
				&\divcp \A_p = \frac{1}{|\Omega_c|} \sum_{p \in \Omega_c} l_{pc} \bn_{pc} \cdot \A_p, 
				&&\divpc \A_c = \frac{1}{|\Omega_p|} \sum_{c \in \Omega_p} l_{pc} \bn_{cp} \cdot \A_c, \label{eq:compatible_div} \\ 
				&\rotcp \A_p = \frac{1}{|\Omega_c|} \sum_{p \in \Omega_c} l_{pc} \bn_{pc} \times \A_p, \qquad 
				&&\rotpc \A_c = \frac{1}{|\Omega_p|} \sum_{c \in \Omega_p} l_{pc} \bn_{cp} \times \A_c. \label{eq:compatible_curl}
			\end{alignat} 
		\end{subequations}
		Once the discrete operators have been defined, one can verify that the continuous identities $\rot \nabla \phi = 0$ and $\div \rot \A = 0$, hold at the discrete level \cite{dumbser2025maxwellGLM}, as direct combinations of the formulas~(\ref{eq:compatible_grad}--\ref{eq:compatible_curl}), so that 
		\begin{subequations}
			\begin{alignat}{2}
				&\rotcp \nablapc \, \phi_c = 0, \quad 
				&&\rotpc \nablacp \, \phi_p = 0,
				\label{eq:curlgrad} \\
				&\divcp \rotpc \A_c = 0, \quad 
				&&\divpc \rotcp \A_p = 0.
				\label{eq:divcurl}
			\end{alignat}
		\end{subequations}
		These discrete identities, will be the key ingredient
		in proving the main results of Section~\ref{sec:num_discretization}. \\
		The discrete heat field and the discrete cleaning scalar are located at the cell centers of the primal mesh $\J_c^n=\J(\x_c,t^n)$ and $\varphi_c^n=\varphi(\x_c,t^n)$. The discrete temperature and the discrete cleaning field are located in the vertices of the primal mesh (cell centers of the dual mesh)
		$T_p^n=T(\x_p,t^n)$ and $\ppsi_p^n=\ppsi(\x_p,t^n)$.
		
		Furthermore, we use the notation 
		\begin{align*}
			&\J_c^{n+\half} = \dfrac{1}{2} \left(\J_c^{n}+\J_c^{n+1}\right), &\ppsi_p^{n+\half} = \dfrac{1}{2} \left(\ppsi_p^{n}+\ppsi_p^{n+1}\right),\\
			&T_p^{n+\half} = \dfrac{1}{2} \left(T_p^{n}+T_p^{n+1}\right),
			&\varphi_c^{n+\half} = \dfrac{1}{2} \left(\varphi_c^{n}+\varphi_c^{n+1}\right).
		\end{align*}
		
		\subsection{Discretization of the reversible system}
		We can discretize the reversible system~\eqref{eq:heatGLM} as follows
		\begin{subequations}
			\begin{align}
				& T_p^{n+1} =T_p^{n}-c_0\,\Delta t \, \divpc\J_c^{n+\half} , \label{eq:T-disc} \\
				& \J_c^{n+1} = \J_c^{n}-c_0\,\Delta t \,\nablacp T_p^{n+\half} - c_h\,\Delta t \, \rotcp \ppsi_p^{n+\half} , \label{eq:J-disc} \\
				& \ppsi_p^{n+1} = \ppsi_p^{n}+c_h\,\Delta t \,\rotpc \J_c^{n+\half} - c_h\,\Delta t \, \nablapc \varphi_c^{n+\half}, \label{eq:psi-disc} \\
				& \varphi_c^{n+1} = \varphi_c^{n} - c_h\,\Delta t \,\divcp \ppsi_p^{n+\half}. \label{eq:phi-disc}
			\end{align}
			\label{eq:heatGLM-disc}
		\end{subequations} 
		Equation~\eqref{eq:J-disc} will be written as
		\begin{equation}
			\J_c^{n+1} = \Js-\frac{c_0\,\Delta t}{2}\nablacp T_p^{n+1} - c_h\,\Delta t \, \rotcp \ppsi_p^{n+\half}, \quad \mbox{with}\,\, \Js = \J_c^{n}-\frac{c_0\,\Delta t}{2}\nablacp T_p^{n}.\label{eq:Jst-disc} 
		\end{equation}
		Inserting~\eqref{eq:Jst-disc} into~\eqref{eq:T-disc} and using the discrete vector identity~\eqref{eq:divcurl}, one obtains the following discrete wave equation for the scalar $T$: 
		\begin{equation}
			T_p^{n+1} - \frac{c_0^2\Delta t^2}{4}\divpc \nablacp T_p^{n+1} =  T_p^{n} - \frac{c_0\,\Delta t}{2} \divpc \left(\J_c^{n} +\Js\right).
			\label{eq:Tst-wave}
		\end{equation}
		On the other hand, Equation~\eqref{eq:psi-disc} can be written as 
		\begin{equation}
			\ppsi_p^{n+1} = \ppsis+c_h\,\Delta t \,\rotpc \J_c^{n+\half} - \frac{c_h\,\Delta t}{2} \nablapc \varphi_c^{n+1}, \quad \mbox{with }\,\,\ppsis = \ppsi_p^{n} - \frac{c_h\,\Delta t}{2}\nablapc \varphi_c^{n}.\label{eq:psist-disc}
		\end{equation}
		Inserting \eqref{eq:J-disc} and \eqref{eq:phi-disc} into \eqref{eq:psist-disc} and taking into account the identity \eqref{eq:curlgrad}, we obtain a discrete vector wave equation for the field $\ppsi$
		\begin{multline}
			\ppsi_p^{n+1} + \frac{c_h^2\Delta t^2}{4} \,  \rotpc \rotcp \ppsi_p^{n+1} - \frac{c_h^2\Delta t^2}{4} \nablapc \divcp \ppsi_p^{n+1} =\ppsis + c_h\,\Delta t \, \rotpc \J_c^{n} - \frac{c_h^2\Delta t^2}{4} \rotpc \rotcp \ppsi_p^{n} \\- \frac{c_h\,\Delta t}{2}  \nablapc \varphi_c^{n}+ \frac{c_h^2\Delta t^2}{4} \nablapc \divcp \ppsi_p^{n}. 
			\label{eq:psist-wave} 
		\end{multline} 
		The final equations to be solved are discrete second-order wave equations for $T$ and $\ppsi$. This is not a coincidence: they are the discrete equivalents of the Euler--Lagrange equations~\eqref{eq:EL-Z} and~\eqref{eq:EL-A}, written not for the original potentials $Z$ and $\A$ but for their time derivatives $T$ and $\ppsi$, respectively. Once the scalar $T_p^{n+1}$ and the cleaning field $\ppsi_p^{n+1}$ have been computed, the heat flux $\J$ and the cleaning scalar $\varphi$ are updated via~\eqref{eq:J-disc} and \eqref{eq:phi-disc}. This methodology is analogous to the post-projection stage proposed in~\cite{harlow1965,patankar1980,karki1989,Casulli1990,CasulliCheng1992} for incompressible Navier--Stokes and shallow water equations, where the pressure and velocity are similarly decoupled through an elliptic projection step.
		
		\subsection{Discretization of the dissipative system}
		The dissipative system of equations \eqref{eq:CattaneoGLM} can be discretized as follows
		\begin{subequations}\label{eq:CattaneoGLM-disc}
			\begin{align}
				& T_p^{n+1} =T_p^{n}-c_0\,\Delta t \, \divpc\J_c^{n+\half} , \label{eq:T2-disc} \\
				& \J_c^{n+1} = \J_c^{n}-c_0\,\Delta t \,\nablacp T_p^{n+\half} - c_h\,\Delta t \, \rotcp \ppsi_p^{n+\half} -\Delta t\dfrac{\J_c^{n+\half}}{\tau}, \label{eq:J2-disc} \\
				& \ppsi_p^{n+1} = \ppsi_p^{n}+c_h\,\Delta t \,\rotpc \J_c^{n+\half} - c_h\,\Delta t \, \nablapc \varphi_c^{n+\half}, \label{eq:psi2-disc} \\
				& \varphi_c^{n+1} = \varphi_c^{n} - c_h\,\Delta t \,\divcp \ppsi_p^{n+\half}. \label{eq:phi2-disc}
			\end{align}
		\end{subequations}  
		We can rewrite~\eqref{eq:J2-disc} as
		\begin{align}
			\J_c^{n+1} &= \dfrac{2\tau-\Delta t}{2\tau+\Delta t}\J_c^{n}-c_0\,\Delta t\dfrac{2\tau}{2\tau+\Delta t}\nablacp T_p^{n+\half} - c_h\,\Delta t \dfrac{2\tau}{2\tau+\Delta t} \rotcp \ppsi_p^{n+\half}\nonumber\\
			&= \Js-c_0\,\Delta t\dfrac{\tau}{2\tau+\Delta t}\nablacp T_p^{n+1} - c_h\,\Delta t \dfrac{2\tau}{2\tau+\Delta t}\rotcp \ppsi_p^{n+\half},
			\label{eq:J2-disc2}
		\end{align}
		with 
		\begin{equation*}
			\Js = \dfrac{2\tau-\Delta t}{2\tau+\Delta t}\J_c^{n}-c_0\,\Delta t \dfrac{\tau}{2\tau+\Delta t}\nablacp T_p^{n}. 
		\end{equation*}
		
		Inserting~\eqref{eq:J2-disc2} into~\eqref{eq:T2-disc} and taking into account the discrete vector identity~\eqref{eq:divcurl}, $\divpc \rotcp \ppsi_p = 0$, one obtains the following discrete equation for the scalar $T$:
		\begin{equation}
			T_p^{n+1} - \frac{\Delta t^2 c_0^2}{2} \dfrac{\tau}{2\tau+\Delta t}\divpc \nablacp T_p^{n+1} =  T_p^{n} - \frac{\Delta t \,c_0}{2}  \divpc \left(\J_c^{n} +\Js\right).
			\label{eq:T2-eq} 
		\end{equation}
		On the other hand, Equation~\eqref{eq:psi2-disc} can be written as
		\begin{equation}
			\ppsi_p^{n+1} = \ppsis+\Delta t \,c_h\rotpc \J_c^{n+\half} - \frac{\Delta t}{2} c_h\nablapc \varphi_c^{n+1}, \quad \mbox{with }\,\,\ppsis = \ppsi_p^{n} - \frac{\Delta t}{2} c_h\nablapc \varphi_c^{n}. \label{eq:psisst-disc}
		\end{equation}
		
		Inserting \eqref{eq:J2-disc2} and \eqref{eq:phi2-disc} into \eqref{eq:psisst-disc} and taking into account the identity \eqref{eq:curlgrad}, $\rotpc \nablacp \, T_p = 0$, we obtain a discrete vector equation for the field $\ppsi$
		\begin{multline}
			\ppsi_p^{n+1} + \frac{\Delta t^2}{2} \, c_h^2 \dfrac{\tau}{2\tau+\Delta t} \rotpc \rotcp \ppsi_p^{n+1} - \frac{\Delta t^2}{4}  \, c_h^2 \nablapc \divcp \ppsi_p^{n+1} =\ppsis + \Delta t \, c_h \dfrac{2\tau}{2\tau+\Delta t}\rotpc \J_c^{n} - \frac{\Delta t^2}{2} \, c_h^2\dfrac{\tau}{2\tau+\Delta t} \rotpc \rotcp \ppsi_p^{n} \\- \frac{\Delta t}{2} \, c_h \nablapc \varphi_c^{n}+ \frac{\Delta t^2}{4} \, c_h^2 \nablapc \divcp \ppsi_p^{n}. 
			\label{eq:psisst-wave} 
		\end{multline} 
		
		As for the reversible system, once the new scalar $T_p^{n+1}$ and the new cleaning field $\ppsi_p^{n+1}$ have been computed, the heat flux $\J$ and the cleaning scalar $\varphi$ are updated via~\eqref{eq:J2-disc} and \eqref{eq:phi2-disc}.
		
		\subsection{Structure-preservation properties}
		In the following part, we show that the properties of the continuous system, proven in Sections~\ref{sec:gov_eq} and \ref{sec:cont_constraints}, are also verified at the discrete level. As a preamble to these proofs, we first establish the following result: 
		\begin{prp}[Discrete Gauss-Ostrogradsky theorem]
			\label{prop:Gauss}
			For any scalar field and vector field $\phi$ and $\B$ located at the cell centers, and any vector field $\A$ located on the dual grid, we have the global identities
			\begin{subequations}
				\begin{gather}
					\sum_{p\in\Omega} |\Omega_p| \,\,\A_p\cdot \nablapc \phi_c + \sum_{c\in\Omega} |\Omega_c| \,\,\phi_c\,\, \nablacp\cdot \A_p = 0, 
					\label{eq:Gauss_divgrad} \\
					\sum_{p\in\Omega} |\Omega_p|\,\prn{ \nablapc\times \A_c} \cdot \B_p - \sum_{c\in\Omega} |\Omega_c|\,\prn{ \nablacp\times \B_p} \cdot \A_c = 0,
					\label{eq:Gauss_rot} 
				\end{gather}
			\end{subequations}
			under periodic boundary conditions. Analogous identities hold when exchanging $c$ and $p$. 
		\end{prp}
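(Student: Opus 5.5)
The plan is a discrete summation by parts: insert the definitions \eqref{eq:compatible_grad}--\eqref{eq:compatible_curl}, cancel the volume factors $|\Omega_p|$ and $|\Omega_c|$ against the $1/|\Omega_p|$ and $1/|\Omega_c|$ in the operators, and rewrite both global sums as one sum over the set of incident pairs $(c,p)$, i.e. pairs with $p\in\Omega_c$, equivalently $c\in\Omega_p$. Periodicity is what makes this work. Every dual cell $\Omega_p$ is surrounded by a complete set of primal cells, and every primal cell by a complete set of vertices. So the incidence relation is symmetric, and the double sums $\sum_{p}\sum_{c\in\Omega_p}$ and $\sum_{c}\sum_{p\in\Omega_c}$ run over exactly the same finite index set, with no boundary corners left over. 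The only geometric input is that the corner vector $l_{pc}\bn_{pc}$ is attached to the pair $(c,p)$ and is the same whichever cell it is viewed from, together with the antisymmetry $\bn_{cp}=-\bn_{pc}$.

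For \eqref{eq:Gauss_divgrad}, expanding the first sum gives
\begin{equation*}
\sum_{p}\sum_{c\in\Omega_p} l_{pc}\,\phi_c\,\A_p\cdot\bn_{cp},
\end{equation*}
and expanding the second gives
\begin{equation*}
\sum_{c}\sum_{p\in\Omega_c} l_{pc}\,\phi_c\,\bn_{pc}\cdot\A_p .
\end{equation*}
After reindexing over incident pairs, the summands are termwise opposite because $\bn_{cp}=-\bn_{pc}$, so the total vanishes. For \eqref{eq:Gauss_rot}, the first sum becomes
\begin{equation*}
\sum_{(c,p)} l_{pc}\,(\bn_{cp}\times\A_c)\cdot\B_p
\end{equation*}
and the second becomes
\begin{equation*}
\sum_{(c,p)} l_{pc}\,(\bn_{pc}\times\B_p)\cdot\A_c .
\end{equation*}
Here the scalar triple product identity enters: $(\bn\times\A)\cdot\B=-(\bn\times\B)\cdot\A$. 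Combining it with $\bn_{cp}=-\bn_{pc}$ gives $(\bn_{cp}\times\A_c)\cdot\B_p=(\bn_{pc}\times\B_p)\cdot\A_c$, so the two sums coincide. This matches the minus sign in \eqref{eq:Gauss_rot}.

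The "exchanged" identities follow by the same computation with the roles of $c$ and $p$ swapped, i.e. $\phi_p$ and $\B_p$ on the dual grid paired with $\A_c$ at cell centers. Relabeling and using the same two sign facts closes those cases.

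The main obstacle is not the algebra but justifying the reindexing cleanly. One has to make sure the corner vector used in $\nablacp$ for the pair $(c,p)$ is literally the same vector, up to sign, as the one used in $\nablapc$. This follows from the definition $l_{pc}\bn_{pc}=\int_{\Gamma_{pc}}\bn\,\mathrm dS$, with $\Gamma_{pc}$ shared by $\Omega_p$ and $\Omega_{cp}$. One also has to check that periodicity removes all boundary corner contributions, which would otherwise appear as an unmatched surface term. I would state this as a short lemma on the equality of the two double sums under periodic boundary conditions before doing the two computations.
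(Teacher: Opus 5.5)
Your proposal is correct and follows the paper's proof essentially step by step. You expand both operators so the volume factors cancel, exchange the order of summation over incident pairs $(c,p)$ using the symmetric connectivity guaranteed by periodicity, and cancel termwise via $\bn_{cp}=-\bn_{pc}$, with the scalar triple product $(\bn\times\B)\cdot\A=-(\bn\times\A)\cdot\B$ supplying the sign in the curl identity. The ``obstacle'' you flag about the corner vector is in fact automatic, since $\nablacp$ and $\nablapc$ are defined with the same $l_{pc}$ and with $\bn_{cp}=-\bn_{pc}$ imposed by definition.
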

		\begin{proof}
			We start by showing the identity \eqref{eq:Gauss_divgrad}. We multiply \eqref{eq:compatible_grad} by $\phi_c$ and dot multiply \eqref{eq:compatible_div} by $\A_p$ to obtain  
			\begin{gather*}
				\A_p\cdot \nablapc \phi_c = \frac{1}{|\Omega_p|} \sum_{c \in \Omega_p} \phi_c\,l_{pc} \bn_{cp} \cdot \mathbf{A}_p, \\
				\phi_c \,\divcp \A_p = \frac{1}{|\Omega_c|} \sum_{p \in \Omega_c} \phi_c\, l_{pc} \bn_{pc} \cdot \A_p.
			\end{gather*}
			This implies in particular that the discrete integral over the computational domain of both equalities in their respective cells yields  
			\begin{equation}
				\sum_{p\in\Omega} |\Omega_p| \,\A_p\cdot \nablapc \phi_c + 	\sum_{c\in\Omega} |\Omega_c| \,\,\phi_c \nablacp\cdot \A_p = \sum_{p\in\Omega} \sum_{c \in \Omega_p} l_{pc} \bn_{cp}\, \phi_c \cdot \mathbf{A}_p  + \sum_{c\in\Omega} \sum_{p \in \Omega_c} l_{pc} \bn_{pc} \cdot \A_p \,\phi_c.
				\label{eq:sum_divgrad}
			\end{equation}
			Under periodic boundary conditions, the discrete domain has no boundary contribution, and the connectivity relation between primal and dual cells is symmetric. Therefore, the order of summation can be exchanged
			\begin{equation*}
				\sum_{p\in\Omega} \sum_{c \in \Omega_p} \, (\cdot) = \sum_{p\in\Omega} \sum_{c \in \Omega_p} \, (\cdot).
			\end{equation*}
			Using this identity and recalling that $\bn_{cp} = -\bn_{pc}$ allows to show that the sum \eqref{eq:sum_divgrad} amounts to zero 
			\begin{equation*}
				\sum_{p\in\Omega} |\Omega_p| \A_p\cdot \nablapc \phi_c + 	\sum_{c\in\Omega} |\Omega_c| \phi_c \nablacp\cdot \A_p = 0,
			\end{equation*}
			which is a discrete equivalent of the Gauss-Ostrogradsky theorem, 
			\begin{equation*}
				\int_\Omega \prn{\phi \nabla\cdot\A + \nabla\phi\cdot\A} \, \d\Omega = \int_\Omega \nabla\cdot\prn{\phi\A} \, \d\Omega = \int_{\partial\Omega} \phi\A\cdot\bn \, \d S, 
			\end{equation*}
			and which also vanishes identically at the continuous level for periodic boundary conditions. A similar proof allows to prove \eqref{eq:Gauss_rot}. Indeed, we take the definitions \eqref{eq:compatible_curl} of $\rotpc\A_c$ and $\rotcp\B_p$, dot multiplied by $\B_p$ and $\A_c$, respectively
			\begin{align*}
				\prn{ \rotpc \A_c} \cdot \B_p &= \frac{1}{|\Omega_p|} \sum_{c \in \Omega_p} l_{pc} (\bn_{cp} \times \A_c)\cdot \B_p  \\
				\prn{ \rotcp \B_p} \cdot \A_c &= \frac{1}{|\Omega_c|} \sum_{p \in \Omega_c} l_{pc} (\bn_{pc} \times \B_p)\cdot \A_c = \frac{-1}{|\Omega_c|} \sum_{p \in \Omega_c} l_{pc} (\bn_{pc} \times \A_c)\cdot \B_p.
			\end{align*}
			Summing both terms over their respective domains and following the same methodology as above, one obtains \eqref{eq:Gauss_rot}. 
		\end{proof}
		
		\subsubsection{Property 1: Asymptotic-preservation of the Fourier limit}\label{sec:fourier-limit}
		In what follows, $\mathbf T^{n} := (T_p^{n})_p \in \mathbb R^{N_p}$ and $\mathbf J^{n} := (\J_c^{n})_c \in \mathbb R^{N_c}$
		denote, respectively, the discrete temperature and heat-flux fields at time $t^n$, and $\nrm{(\cdot)_p}_{L_2} := \sqrt{\sum_{p\in\Omega} |\Omega_p|\,(\cdot)_p^2}$
		denotes the discrete $L_2$ norm. 
		\begin{theorem} \label{thm:AP}
			For well-prepared initial data satisfying
			$\nrm{\mathbf T^0}_{L_2} = \bigO(1)$ and $\nrm{\divpc \mathbf J^0}_{L_2} = \bigO(c_0^{-m})$
			with $m \in\,]-1,+\infty]$, and for a fixed value of $c_h$, the
			scheme~\eqref{eq:CattaneoGLM-disc} is asymptotic-preserving in the
			sense that the discrete Fourier law is recovered in the limit
			$c_0 \to +\infty$, \textit{i.e.},
			\begin{equation}
				\nrm{\frac{\mathbf T^{n+1}-\mathbf T^n}{\Delta t}
					- \kappa\, \mathbf L\, \mathbf T^{n+\half}}_{L_2} = 
				\bigO\prn{c_0^{-r}},
				\label{eq:discrete_residual}
			\end{equation}
			where $r = \min\prn{2,\, 1+m}$, and $\mathbf L \in \mathbb R^{N_p\times N_p}$ denotes
			the discrete global Laplace operator, defined by $\prn{\mathbf L\,\mathbf T}_p := \prn{\divpc\nablacp T_p}_p$.
		\end{theorem}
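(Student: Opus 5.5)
The plan is to reproduce, at the discrete level, the argument behind Proposition~\ref{prop:Fourier}. The first step is an exact identity for the residual. Set $\tau=\kappa/c_0^2$. Apply $\divpc$ to \eqref{eq:J2-disc} and use the compatibility identity \eqref{eq:divcurl}, $\divpc\rotcp\ppsi_p=0$, to remove the cleaning term. Writing $u_p^n:=\divpc\J_c^n$, this gives
\begin{equation*}
u_p^{n+1}-u_p^n=-c_0\Delta t\,(\mathbf L\mathbf T^{n+\half})_p-\frac{c_0^2\Delta t}{\kappa}\,u_p^{n+\half}.
\end{equation*}
Multiply by $\kappa/(c_0\Delta t)$ and combine with \eqref{eq:T2-disc}, $(T_p^{n+1}-T_p^n)/\Delta t=-c_0u_p^{n+\half}$. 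This yields the exact discrete analogue of the continuous residual identity:
\begin{equation*}
\frac{\mathbf T^{n+1}-\mathbf T^n}{\Delta t}-\kappa\mathbf L\mathbf T^{n+\half}=\frac{\kappa}{c_0}\,\frac{\mathbf u^{n+1}-\mathbf u^n}{\Delta t}.
\end{equation*}
The theorem therefore reduces to showing $\nrm{\mathbf u^{n+1}-\mathbf u^n}_{L_2}=\bigO(c_0^{1-r})$ uniformly in $n$, for fixed $\Delta t$ and a fixed mesh.

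Second, observe that the pair $(\mathbf T,\mathbf u)$ evolves by a closed two-level linear recurrence: the two relations above involve only $\mathbf T$, $\mathbf u$ and $\mathbf L$, with $c_h$ and $\ppsi$ absent. By Proposition~\ref{prop:Gauss}, $\mathbf L=\divpc\nablacp$ is self-adjoint and negative semidefinite in the $|\Omega_p|$-weighted inner product. So I would diagonalize $\mathbf L$ in an orthonormal eigenbasis, with eigenvalues $-\lambda_k$ and $0\le\lambda_k\le\lambda_{\max}$, where $\lambda_{\max}$ is independent of $c_0$ because the mesh is fixed. Mode by mode, with $a:=c_0^2\Delta t/(2\kappa)$, the Crank--Nicolson system reads
\begin{equation*}
\hat T^{n+1}-\hat T^n=-c_0\Delta t\,\hat u^{n+\half},\qquad (1+a)\hat u^{n+1}=(1-a)\hat u^n+c_0\Delta t\lambda_k\hat T^{n+\half}.
\end{equation*}
This is a $2\times 2$ linear system with an explicit amplification matrix $G_k(c_0)$. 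The discrete energy estimate (the discrete counterpart of Proposition~\ref{prop:energy}, together with the bound $\nrm{\mathbf T^0}=\bigO(1)$) gives $\nrm{\mathbf T^n}_{L_2}=\bigO(1)$ uniformly. Hence $\nrm{\mathbf L\mathbf T^{n+\half}}_{L_2}\le\lambda_{\max}\,\bigO(1)$.

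Third, I would bound the increment directly from the $u$-recurrence:
\begin{equation*}
\hat u^{n+1}-\hat u^n=\frac{-2a\,\hat u^n+c_0\Delta t\lambda_k\hat T^{n+\half}}{1+a}.
\end{equation*}
Since $2a/(1+a)\le 2$ and $c_0\Delta t/(1+a)\le 2\kappa/c_0$, this yields
\begin{equation*}
\nrm{\mathbf u^{n+1}-\mathbf u^n}\le 2\nrm{\mathbf u^n}+\frac{2\kappa\lambda_{\max}}{c_0}\,\bigO(1).
\end{equation*}
Multiplying by $\kappa/(c_0\Delta t)$, it then suffices to prove the uniform bound $\nrm{\mathbf u^n}_{L_2}=\bigO(c_0^{-\min(1,m)})$. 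This gives the residual bound $\bigO(c_0^{-1-\min(1,m)})=\bigO(c_0^{-r})$. The condition $m>-1$ is exactly what makes $r>0$.

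The main obstacle is this uniform-in-$n$ bound on $\mathbf u^n$. The damping factor $(1-a)/(1+a)\to-1$ as $c_0\to\infty$, so a naive geometric sum of the forcing $(2\kappa/c_0)\lambda_k\hat T$ diverges like $a\sim c_0^2$. I would handle this by studying the spectrum of $G_k(c_0)$ explicitly. Its eigenvalues should split into one near $-1$ (a fast, nearly undamped oscillating mode, excited only by the initial mismatch) and one near $1-\bigO(\lambda_k\kappa\Delta t)$ (the slow diffusive mode). For the slow mode, $\hat u\approx-(\kappa/c_0)\lambda_k\hat T$, i.e. $\hat u=\bigO(c_0^{-1})$. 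The fast-mode component has amplitude $\bigO(|\hat u^0|+c_0^{-1}|\hat T^0|)=\bigO(c_0^{-\min(1,m)})$ and a modulus at most $1$. Together these give the claimed bound.

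To make this robust, my first attempt would be an alternative to eigenvector bookkeeping: a discrete Lyapunov functional for the deviation $\mathbf w^n:=\mathbf u^n+(\kappa/c_0)\mathbf L\mathbf T^n$, the analogue of $s_0/c_0^2$ in Lemma~\ref{lmm:Fourier}. I would try to show that $\nrm{\mathbf w^n}$ is non-increasing up to $\bigO(c_0^{-1})$ perturbations. If that fails, I would fall back on the explicit eigenvector decomposition of $G_k$, using $\lambda_k\le\lambda_{\max}$ to obtain constants uniform in $k$ and $n$.
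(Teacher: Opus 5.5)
Your residual identity $\frac{\mathbf T^{n+1}-\mathbf T^n}{\Delta t}-\kappa\mathbf L\mathbf T^{n+\half}=\frac{\kappa}{c_0}\frac{\mathbf u^{n+1}-\mathbf u^n}{\Delta t}$ is correct. After inserting your formula for $\hat u^{n+1}-\hat u^n$, it is exactly the unexpanded form of the paper's \eqref{eq:discreteFourier1}: the residual equals $-\frac{2\kappa}{(1+\epsilon)c_0\Delta t}\divpc\mathbf J^n-\frac{\epsilon\kappa}{1+\epsilon}\mathbf L\mathbf T^{n+\half}$ with $\epsilon=1/a$.

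Where you genuinely differ is in how $\mathbf u^n=\divpc\mathbf J^n$ and $\mathbf T^n$ are controlled. The paper uses a one-step induction, $\nrm{\divpc\mathbf J^{n+1}}_{L_2}\le\nrm{\divpc\mathbf J^{n}}_{L_2}+\bigO(c_0^{-1})$ and $\nrm{\mathbf T^{n+1}}_{L_2}\le C_1\nrm{\mathbf T^n}_{L_2}+\frac{C_2}{c_0}\nrm{\divpc\mathbf J^n}_{L_2}$. This is elementary, but its constants accumulate with $n$, so it only covers a fixed number of steps. Your fast/slow splitting of $G_k$ would give bounds uniform in $n$, at the price of an explicit spectral computation. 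In that splitting the fast eigenvector is $\approx(\kappa/c_0,1)$ with eigenvalue near $-1$, the slow one is $\approx(1,\kappa\lambda_k/c_0)$, and the eigenvector matrices are $\mathbf I+\bigO(c_0^{-1})$ uniformly for $\lambda_k\le\lambda_{\max}$. A harmless sign slip: with $\mathbf Le_k=-\lambda_ke_k$ the slow mode has $\hat u\approx+\frac{\kappa}{c_0}\lambda_k\hat T$, which is consistent with your $\mathbf w^n$.

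The step that does not hold as justified is $\nrm{\mathbf T^n}_{L_2}=\bigO(1)$ ``from the discrete energy estimate''.
\begin{itemize}
\item Theorem~\ref{thm:discrete_energy} only gives $\nrm{\mathbf T^n}_{L_2}^2\le 2\toten^0$. But $\toten^0$ contains $\nrm{\mathbf J^0}$, $\nrm{\ppsi^0}$ and $\nrm{\varphi^0}$, none of which the hypotheses control. For instance, a huge divergence-free part of $\mathbf J^0$ is allowed, and it never reaches $\mathbf T$.
\item Restricting to your closed $(\mathbf T,\mathbf u)$ subsystem does not fully repair this. There the dissipated form is $\sum_{\lambda_k>0}(|\hat T_k|^2+|\hat u_k|^2/\lambda_k)$, whose initial value is $\bigO(1+c_0^{-2m})$.
\item For $m\in\,]-1,0[$, which is exactly the regime of the test $A=c_0^{1/2}$, this yields only $\nrm{\mathbf T^n}_{L_2}=\bigO(c_0^{-m})$, not $\bigO(1)$.
\end{itemize}

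The missing observation is that $\mathbf u$ feeds back into $\mathbf T$ only through a factor $1/c_0$. The paper uses this via the term $\frac{2\beta(c_0)}{c_0}\divpc\mathbf J^n$ in its $\mathbf T$-update. In your own decomposition:
\begin{itemize}
\item the slow amplitude is $\hat T^0_k+\bigO(c_0^{-1}|\hat u^0_k|)$;
\item the fast mode contributes only $\bigO(c_0^{-1}|\hat u^0_k|+c_0^{-2}|\hat T^0_k|)$ to $\hat T^n_k$.
\end{itemize}
Both are $\bigO(1)$ precisely because $m>-1$. So $m>-1$ is needed already for boundedness of $\mathbf T^n$, not only for $r>0$. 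You should read the $\mathbf T$-bound off the eigen-decomposition rather than the energy.

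Even the weaker subsystem bound $\bigO(1+c_0^{-m})$ would preserve your final rate, since $\mathbf L\mathbf T^{n+\half}$ enters the residual with a factor $\bigO(c_0^{-2})$. The total-energy argument as written, however, does not give even that.
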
	
		\begin{proof}
			We start from the discrete equation \eqref{eq:J2-disc} which we rewrite as
			\begin{equation*}
				\tau \frac{\J_c^{n+1} - \J_c^{n}}{\Delta t} + \J_c^{n+\half} = -c_0 \tau \,\nablacp T_p^{n+\half} - c_h \tau \, \rotcp \ppsi_p^{n+\half}.
			\end{equation*}
			Note that 
			\begin{equation*}
				\frac{\J_c^{n+1} - \J_c^{n}}{\Delta t} = \frac{\J_c^{n+1} + \J_c^{n} -\J_c^{n} -  \J_c^{n}}{\Delta t}  = 2\frac{\J_c^{n+\half} - \J_c^{n}}{\Delta t},
			\end{equation*}
			which allows to write
			\begin{equation*}
				\prn{1+\dfrac{2\tau}{\Delta t}}\J_c^{n+\half} - \frac{2\tau}{\Delta t} \J_c^n = -c_0\tau \,\nablacp T_p^{n+\half} - c_h \tau \, \rotcp \ppsi_p^{n+\half},
			\end{equation*}
			or, equivalently,
			\begin{equation*}
				\J_c^{n+\half} = \frac{2\tau}{2\tau+\Delta t} \J_c^n-c_0\frac{\tau \, \Delta t}{2\tau+\Delta t} \,\nablacp T_p^{n+\half} - c_h\frac{\tau \Delta t}{2\tau + \Delta t} \, \rotcp \ppsi_p^{n+\half}.
			\end{equation*}
			Now, we replace $\tau = \kappa / c_0^2$ and apply a discrete divergence operator to the last equation, allowing to cancel the curl term, and we further multiply by $c_0$ to obtain 
			\begin{equation*}
				c_0 \divpc\J_c^{n+\half} = \frac{2\kappa c_0}{2\kappa + \Delta tc_0^2} \divpc\J_c^n-\frac{\kappa c_0^2 \Delta t}{2\kappa + \Delta tc_0^2} \,\divpc\nablacp T_p^{n+\half}.
			\end{equation*}
			Therefore, for fixed values of $\Delta t$ and $\kappa$, one can expand in series the constant coefficients in the limit $c_0\to+\infty$ 
			\begin{equation}
				c_0 \divpc\J_c^{n+\half} =  \prn{\frac{2\kappa}{\Delta t c_0} + \bigO\prn{c_0^{-3}}} \divpc\J_c^n +\prn{-\kappa+\bigO\prn{c_0^{-2}}}  \divpc\nablacp T_p^{n+\half}.
				\label{eq:c0divJ}
			\end{equation}
			Now, we substitute the left hand-side using \eqref{eq:T-disc} to obtain 
			\begin{equation}
				\frac{T_p^{n+1}-T_p^n}{\Delta t} = \prn{1+\bigO\prn{c_0^{-2}}} \prn{-\frac{2\kappa}{\Delta tc_0}\divpc\J_c^n + \kappa\,\divpc \nablacp T_p^{n+\half} }.
				\label{eq:discreteFourier1}
			\end{equation}
			Here we point out that the overall order of accuracy with respect to the discrete
			Fourier law depends on the scaling of $\divpc\J_c^n$. We show by induction that the latter only depends on the scaling of the initial datum. In fact, assume at the $n^{th}$ iteration that 
			\begin{equation}
				\nrm{\mathbf T^{n}}_{L_2} = \bigO(1), \quad
				\nrm{\divpc\mathbf J^{n}}_{L_2} = \bigO(c_0^{-m}), \quad
				\text{where} \quad m > -1,
				\label{eq:assumptions_tn}
			\end{equation}
			and let us show that this scaling holds at the subsequent iteration. First, one can rewrite the discrete wave equation \eqref{eq:Tst-wave} globally as
			\begin{equation*}
				\mathbb{A}_-\,\mathbf T^{n+1} = \mathbb{A}_+\,\mathbf T^{n}
				- \frac{2\beta(c_0)}{c_0}\,\divpc\mathbf J^{n},
				\qquad
				\beta(c_0)=\dfrac{\kappa}{1+\frac{2\kappa}{\Delta t}\frac{1}{c_0^2}}>0,
			\end{equation*}
			with $\mathbb{A}_\pm := \I \pm \tfrac{1}{2}\beta(c_0)\Delta t\,\mathbf{L}$. The negative semi-definiteness of $\mathbf{L}$, ensures that $\mathbb{A}_-$ is positive-definite. Besides, since
			$\beta\le\kappa$, all the operators are bounded uniformly in $c_0$ in the discrete
			$L_2$ norm, so that
			\begin{equation*}
				\nrm{\mathbf{T}^{n+1}}_{L_2} \le C_1\,\nrm{\mathbf{T}^{n}}_{L_2}
				+ \frac{C_2}{c_0}\,\nrm{\divpc\mathbf J^{n}}_{L_2},
				\qquad C_1,C_2\ge0.
			\end{equation*}
			This inequality does not imply any maximum principle for $\mathbf{T}_p^n$, but ensures that
			any growth in time is independent of $c_0$, in particular
			$\nrm{\mathbf T^{n+1}}_{L_2}=\bigO(1)$.
			For $\divpc\mathbf J^{n+1}$, we recast \eqref{eq:c0divJ}, using the induction hypothesis \eqref{eq:assumptions_tn}, and the latter result
			\begin{align*}
				\nrm{\divpc\mathbf J^{n+1}}_{L_2}
				&\le \prn{1+\bigO\prn{c_0^{-2}}}\nrm{\divpc\mathbf J^{n}}_{L_2}
				+ \bigO\prn{c_0^{-1}}\,\nrm{\kappa\,\mathbf{L}\,\mathbf T^{n+\half}}_{L_2}
				\nonumber\\
				&= \bigO\prn{c_0^{-m}} + \bigO\prn{c_0^{-1}}
				= \bigO\prn{c_0^{-\min(m,1)}}.
			\end{align*}
			Inserting this into \eqref{eq:discreteFourier1} finally gives
			\begin{equation*}
				\nrm{\frac{\mathbf T^{n+1}-\mathbf T^n}{\Delta t}
					- \kappa\, \mathbf{L}\, \mathbf{T}^{n+\half}}_{L_2}
				= \bigO\prn{c_0^{-r}},
				\qquad r=\min\prn{2,\,1+m}.
			\end{equation*}
			Convergence requires in particular $r>0$, that is $m>-1$. 
		\end{proof}
		
		\subsubsection{Property 2: Discrete energy / Discrete Lyapunov functional preservation}
		\begin{theorem}	\label{thm:discrete_energy} 	
			If periodic boundary conditions are imposed, the global discrete total energy 
			\begin{equation*}
				\toten^n =  \half\left(\sum \limits_{p \in \Omega} |\Omega_p| \prn{T_p^n}^2 + 
				\sum \limits_{c \in \Omega} |\Omega_c| \prn{\nrm{\J_c^n}}^2 + 
				\sum \limits_{p \in \Omega} |\Omega_p| \prn{\nrm{\ppsi_p^n}}^2 + 
				\sum \limits_{c \in \Omega} |\Omega_c| \prn{\varphi_c^n}^2\right), 
			\end{equation*}	
			in the scheme~\eqref{eq:CattaneoGLM-disc} 
			\begin{enumerate}
				\item[a)] is dissipated consistently with \eqref{eq:en_lyapunov}, \textit{i.e.}, 
				\begin{equation}
					\toten^{n+1} - \toten^n = - \Ediss <0,
					\label{eq:dissipated_en}
				\end{equation}
				where $\Ediss \coloneqq \dissipterm$, and
				\item[b)] is conserved exactly in the absence of sources. More precisely, in the limit $\tau\to\infty$, the dissipative scheme~\eqref{eq:CattaneoGLM-disc} reduces to the reversible scheme~\eqref{eq:heatGLM-disc}, and the discrete total energy is exactly conserved, \textit{i.e.}, 
				\begin{equation}
					\toten^{n} = \toten^0.
					\label{eq:conserved_en}
				\end{equation}
			\end{enumerate}
		\end{theorem}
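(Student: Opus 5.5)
The plan is the discrete analogue of the proof of Proposition~\ref{prop:energy}: multiply each update of \eqref{eq:CattaneoGLM-disc} by the corresponding Crank--Nicolson midpoint value, sum over the appropriate cells, and use the discrete Gauss--Ostrogradsky identities of Proposition~\ref{prop:Gauss} to cancel all flux terms.

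\textbf{Step 1 (time differences).} For each variable we use the algebraic identity $\half\big((a^{n+1})^2-(a^n)^2\big)=(a^{n+1}-a^n)\,a^{n+\half}$, and its vector version with the dot product. Then:
\begin{itemize}
\item we multiply \eqref{eq:T2-disc} by $|\Omega_p|\,T_p^{n+\half}$ and sum over $p$;
\item we take the dot product of \eqref{eq:J2-disc} with $|\Omega_c|\,\J_c^{n+\half}$ and sum over $c$;
\item we take the dot product of \eqref{eq:psi2-disc} with $|\Omega_p|\,\ppsi_p^{n+\half}$ and sum over $p$;
\item we multiply \eqref{eq:phi2-disc} by $|\Omega_c|\,\varphi_c^{n+\half}$ and sum over $c$.
\end{itemize}
Adding the four relations, the left-hand sides give exactly $\toten^{n+1}-\toten^n$. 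The right-hand side consists of three coupled pairs of terms and the source term $-\frac{\Delta t}{\tau}\sum_c|\Omega_c|\nrm{\J_c^{n+\half}}^2$.

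\textbf{Step 2 (cancellation of the coupling terms).} The three coupled pairs are the following.
\begin{itemize}
\item The pair $-c_0\Delta t\big(\sum_p|\Omega_p|T_p^{n+\half}\divpc\J_c^{n+\half}+\sum_c|\Omega_c|\J_c^{n+\half}\cdot\nablacp T_p^{n+\half}\big)$. This vanishes by \eqref{eq:Gauss_divgrad} with the roles of $c$ and $p$ exchanged, which the proposition allows.
\item The pair $-c_h\Delta t\big(\sum_p|\Omega_p|\ppsi_p^{n+\half}\cdot\nablapc\varphi_c^{n+\half}+\sum_c|\Omega_c|\varphi_c^{n+\half}\divcp\ppsi_p^{n+\half}\big)$. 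This vanishes directly by \eqref{eq:Gauss_divgrad}.
\item The pair $c_h\Delta t\big(\sum_p|\Omega_p|(\rotpc\J_c^{n+\half})\cdot\ppsi_p^{n+\half}-\sum_c|\Omega_c|(\rotcp\ppsi_p^{n+\half})\cdot\J_c^{n+\half}\big)$. This vanishes by \eqref{eq:Gauss_rot} with $\A_c=\J_c^{n+\half}$ and $\B_p=\ppsi_p^{n+\half}$.
\end{itemize}
What remains is $\toten^{n+1}-\toten^n=-\Ediss$, which is \eqref{eq:dissipated_en}.

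\textbf{Step 3 (sign and the reversible limit).} Since $\tau>0$, we have $\Ediss\ge0$. It is strictly positive unless $\J^{n+\half}\equiv0$; the statement's strict inequality should be read in that sense, as non-increase in general. For part~b), letting $\tau\to\infty$ (i.e.\ $1/\tau=0$) reduces \eqref{eq:CattaneoGLM-disc} to \eqref{eq:heatGLM-disc}. The identity of Step~2 then gives $\toten^{n+1}=\toten^n$, and induction on $n$ yields \eqref{eq:conserved_en}.

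\textbf{Main obstacle.} The only delicate point is making sure the signs and index roles match Proposition~\ref{prop:Gauss}. In particular, the curl pair must use the antisymmetry $(\bn\times\B)\cdot\A=-(\bn\times\A)\cdot\B$ together with $\bn_{cp}=-\bn_{pc}$, and the $T$--$\J$ pair needs the $c\leftrightarrow p$ version of \eqref{eq:Gauss_divgrad}. If needed, I would re-derive these two identities directly by exchanging the double sums over primal/dual adjacency, exactly as in the proof of Proposition~\ref{prop:Gauss}. Periodicity guarantees that no boundary terms arise.
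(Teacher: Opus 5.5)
Your proposal is correct and follows essentially the same route as the paper. You multiply each update of \eqref{eq:CattaneoGLM-disc} by its Crank--Nicolson midpoint value, sum over cells, and cancel the $T$--$\J$, $\varphi$--$\ppsi$ and curl cross terms pairwise via \eqref{eq:Gauss_divgrad} and \eqref{eq:Gauss_rot}, which leaves only $-\Ediss$; your added remark that $\Ediss\ge 0$, with equality when $\J^{n+\half}\equiv 0$, so that the strict inequality in \eqref{eq:dissipated_en} should be read as non-increase, is a correct refinement that the paper's proof does not address.
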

		\begin{proof}
			We multiply each of the discrete equations \eqref{eq:CattaneoGLM-disc}, by the corresponding conserved variable at time $t^{n+\half}$ and we integrate over their respective computational domain to obtain 
			
			\begin{subequations}
				\begin{align}
					\half\sum_{p\in\Omega}|\Omega_p|\prn{T_p^{n+1}}^2-\half\sum_{p\in\Omega}|\Omega_p|\prn{T_p^{n}}^2 
					=& -c_0\,\Delta t \sum_{p\in\Omega}|\Omega_p| T_p^{n+\half} \divpc\J_c^{n+\half} , 
					\label{eq:EN_1} \\
					\half\sum_{c\in\Omega}|\Omega_c|\prn{\J_c^{n+1}}^2
					-\half\sum_{c\in\Omega}|\Omega_c|\prn{\J_c^{n}}^2
					=& -c_0\,\Delta t \sum_{c\in\Omega}|\Omega_c|
					\,\J_c^{n+\half}\cdot\nablacp T_p^{n+\half} \nonumber \\
					& -c_h\,\Delta t \sum_{c\in\Omega}|\Omega_c|
					\,\J_c^{n+\half}\cdot\rotcp \ppsi_p^{n+\half}
					-\Delta t\sum_{c\in\Omega}|\Omega_c|
					\dfrac{|\J_c^{n+\half}|^2}{\tau},
					\label{eq:EN_2} \\
					\half\sum_{p\in\Omega}|\Omega_p|\prn{\ppsi_p^{n+1}}^2
					-\half\sum_{p\in\Omega}|\Omega_p|\prn{\ppsi_p^{n}}^2 
					=&\quad c_h\,\Delta t \sum_{p\in\Omega}|\Omega_p|
					\,\ppsi_p^{n+\half}\cdot\rotpc \J_c^{n+\half} \label{eq:EN_3}\\
					& -c_h\,\Delta t \sum_{p\in\Omega}|\Omega_p|
					\,\ppsi_p^{n+\half}\cdot\nablapc \varphi_c^{n+\half},\nonumber \\
					\half\sum_{c\in\Omega}|\Omega_c|\prn{\varphi_c^{n+1}}^2
					-\half\sum_{c\in\Omega}|\Omega_c|\prn{\varphi_c^{n}}^2
					=& -c_h\,\Delta t \sum_{c\in\Omega}|\Omega_c|
					\,\varphi_c^{n+\half}\,\divcp \ppsi_p^{n+\half}.
					\label{eq:EN_4}
				\end{align}
				\label{eq:discrete_dissipative}
			\end{subequations}
			By summing all these equations, the left hand side terms simply amount to the total discrete energy difference $\toten^{n+1} - \toten_n$. On the right-hand side, the terms involving $\divpc\J_c^{n+\half}$ and $\nablacp T_p^{n+\half}$ from~\eqref{eq:EN_1} and~\eqref{eq:EN_2}, and those involving $\nablapc\varphi_c^{n+\half}$ and $\divcp\ppsi_p^{n+\half}$ from~\eqref{eq:EN_3} and~\eqref{eq:EN_4}, cancel pairwise by~\eqref{eq:Gauss_divgrad}. The cross terms involving $\rotcp\ppsi_p^{n+\half}$ and $\rotpc\J_c^{n+\half}$ from~\eqref{eq:EN_2} and~\eqref{eq:EN_3} cancel by~\eqref{eq:Gauss_rot}, leaving only the dissipation term and yielding~\eqref{eq:dissipated_en}.
		\end{proof}
			
		\subsubsection{Property 3: Discrete rotational energy conservation / dissipation}
		Define the discrete curls
		\begin{equation*}
			\wpJ = \rotpc \J_c, \qquad \wcpsi = \rotcp \ppsi_p.
		\end{equation*}
		\begin{theorem}
			\label{thm:discrete_curl}
			If periodic boundary conditions are imposed, the discrete rotational energy
			\begin{equation*}
				\toten_{\curl}^n = \half\sum_{p\in\Omega}|\Omega_p|  \nrm{\wpJn}^2 + \half\sum_{c\in\Omega} |\Omega_c| \nrm{\wcpsin}^2
			\end{equation*}
			in the scheme~\eqref{eq:CattaneoGLM-disc} satisfies the	following:
			\begin{enumerate}
				\item[a)] For the dissipative scheme~\eqref{eq:CattaneoGLM-disc}, the discrete rotational energy is dissipated, \textit{i.e.},
				\begin{equation}
					\toten_{\curl}^{n+1} - \toten_{\curl}^{n} = -\Edisscurl \leq 0, 
					\label{eq:curl_diss_disc}
				\end{equation}
				where $\Edisscurl \coloneqq \dissiptermcurl$.	
				\item[b)] In the limit $\tau \to \infty$, the discrete rotational energy is conserved, \textit{i.e.},
				\begin{equation*}
					\toten_{\curl}^{n} = \toten_{\curl}^{0}.
				\end{equation*}
			\end{enumerate}
		\end{theorem}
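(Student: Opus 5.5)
Mimic the continuous proof of Proposition~\ref{prop:curl_energy} at the discrete level. First derive discrete evolution equations for the discrete curls. Apply $\rotpc$ to the update \eqref{eq:J2-disc}. The term $\rotpc\nablacp T_p^{n+\half}$ vanishes by \eqref{eq:curlgrad}, which gives
\begin{equation*}
	\wpJnp = \wpJn - c_h\Delta t\,\rotpc\rotcp\ppsi_p^{n+\half} - \frac{\Delta t}{\tau}\,\wpJnph .
\end{equation*}
Here we use linearity, so $\rotpc\J_c^{n+\half}=\wpJnph$. Similarly, apply $\rotcp$ to \eqref{eq:psi2-disc}. The gradient term $\rotcp\nablapc\varphi_c^{n+\half}$ vanishes by \eqref{eq:curlgrad}, and
\begin{equation*}
	\wcpsinp = \wcpsin + c_h\Delta t\,\rotcp\wpJnph .
\end{equation*}
These are exact discrete analogues of \eqref{eq:curl_evolution}, with Crank--Nicolson time averaging.

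Next, perform a discrete energy estimate. Take the dot product of the first relation with $|\Omega_p|\,\wpJnph$ and sum over $p$. Take the dot product of the second with $|\Omega_c|\,\wcpsinph$ and sum over $c$. Since $\wpJnph=\frac12(\wpJn+\wpJnp)$, the left-hand sides telescope into $\frac12\sum_p|\Omega_p|(\nrm{\wpJnp}^2-\nrm{\wpJn}^2)$ and its $c$-analogue. Their sum is exactly $\toten_{\curl}^{n+1}-\toten_{\curl}^n$. On the right-hand side we get three terms:
\begin{equation*}
	-c_h\Delta t\sum_p|\Omega_p|\,\wpJnph\cdot\rotpc\wcpsinph,\qquad
	+c_h\Delta t\sum_c|\Omega_c|\,\wcpsinph\cdot\rotcp\wpJnph,\qquad
	-\frac{\Delta t}{\tau}\sum_p|\Omega_p|\nrm{\wpJnph}^2 .
\end{equation*}
In the first term we used $\rotcp\ppsi_p^{n+\half}=\wcpsinph$.

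The only nontrivial point, and the main obstacle, is to show that the two cross terms cancel. This requires the discrete integration-by-parts identity \eqref{eq:Gauss_rot}, applied with a cell-centered field $\A_c=\wcpsinph$ and a nodal field $\B_p=\wpJnph$:
\begin{equation*}
	\sum_p|\Omega_p|\,(\rotpc\wcpsinph)\cdot\wpJnph=\sum_c|\Omega_c|\,(\rotcp\wpJnph)\cdot\wcpsinph .
\end{equation*}
This matches the roles of the fields in Proposition~\ref{prop:Gauss} exactly, with $\A$ at centers and $\B$ at nodes. The proof there, which rests on the antisymmetry $\bn_{cp}=-\bn_{pc}$, exchanging the double sum under periodicity, and the triple product identity, applies verbatim.

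With the cross terms cancelled, only the dissipation term remains, which is $-\Edisscurl\le 0$. This gives \eqref{eq:curl_diss_disc}, and part a) follows. For part b), letting $1/\tau\to0$ makes the scheme reduce to \eqref{eq:heatGLM-disc} and removes the dissipation term. Then $\toten_{\curl}^{n+1}=\toten_{\curl}^n$, and by induction $\toten_{\curl}^n=\toten_{\curl}^0$.
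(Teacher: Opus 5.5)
Your proposal is correct and follows the paper's proof essentially step for step. You apply $\rotpc$ and $\rotcp$ to the $\J$ and $\ppsi$ updates, removing the gradient terms via \eqref{eq:curlgrad}, then test with the Crank--Nicolson midpoint curls and cancel the cross terms using \eqref{eq:Gauss_rot}, with $\A_c=\wcpsinph$ at cell centers and $\B_p=\wpJnph$ at nodes, which leaves exactly $-\Edisscurl$.
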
	
		\begin{proof}
			\begin{enumerate}
				\item[a)] Applying $\rotpc$ to \eqref{eq:J2-disc} and $\rotcp$ to \eqref{eq:psi2-disc}, multiplying the resulting equations by $|\Omega_p|\,\wpJnph$ and $|\Omega_c|\,\wcpsinph$ respectively, and summing over all cells, one obtains
				\begin{subequations}
					\begin{align}
						\half\sum_{p\in\Omega}|\Omega_p|\nrm{\wpJnp}^2 - \half\sum_{p\in\Omega}|\Omega_p|\nrm{\wpJn}^2
						=& -c_h\,\Delta t\sum_{p\in\Omega}|\Omega_p| \,\wpJnph \cdot \rotpc\wcpsinph
						- \frac{\Delta t}{\tau}\sum_{p\in\Omega}|\Omega_p| \nrm{\wpJnph}^2,
						\label{eq:curl_1} \\
						\half\sum_{c\in\Omega}|\Omega_c|\nrm{\wcpsinp}^2 - \half\sum_{c\in\Omega}|\Omega_c|\nrm{\wcpsin}^2
						=&\quad c_h\,\Delta t\sum_{c\in\Omega}|\Omega_c|\, \wcpsinph \cdot \rotcp\wpJnph.
						\label{eq:curl_2}
					\end{align}
				\end{subequations}
				Summing~\eqref{eq:curl_1} and~\eqref{eq:curl_2}, the cross terms involving $\rotpc\wcpsinph$ and $\rotcp\wpJnph$ cancel by~\eqref{eq:Gauss_rot}, leaving only the dissipation term and yielding~\eqref{eq:curl_diss_disc}.	
				\item[b)] In the limit $\tau \to \infty$, the dissipation term in~\eqref{eq:curl_1} vanishes, and~\eqref{eq:curl_diss_disc} reduces to $\toten_{\curl}^{n+1} = \toten_{\curl}^{n}$, so that $\toten_{\curl}^n = \toten_{\curl}^0$ for all $n \geq 0$.
			\end{enumerate}
		\end{proof}	
		As a consequence, if $\rotpc \J_c^0 = 0$ and $\rotcp \ppsi_p^0 = 0$, then $\rotpc \J_c^n = 0$ and  $\rotcp \ppsi_p^n = 0$, $\forall n\ge0.$

		\subsubsection{Property 4: Discrete grad-div involutions}
		
		\begin{theorem}
			\label{thm:discrete_grad_div}
			The discrete grad-div energy
			\begin{equation*}
				\toten_{T}^n =\half\sum_{c\in\Omega} |\Omega_c|\nrm{\nablacp T_p^n}^2 +\half\sum_{p\in\Omega} |\Omega_p|\nrm{\divpc \J_c^n}^2
			\end{equation*}
			in the scheme~\eqref{eq:CattaneoGLM-disc} satisfies the following:
			\begin{enumerate}
				\item[a)] For the dissipative scheme~\eqref{eq:CattaneoGLM-disc}, the discrete grad-div energy is dissipated, \textit{i.e.},
				\begin{equation}
					\toten_{T}^{n+1}-\toten_{T}^{n}	= -\Edissdiv\leq 0,
					\label{eq:graddiv_disc_diss}
				\end{equation}
				where $\Edissdiv \coloneqq \dissiptermdiv$.	
				\item[b)] In the limit $\tau\to\infty$, the discrete grad-div energy is exactly conserved, \textit{i.e.},
				\begin{equation*}
					\toten_{T}^{n}=\toten_{T}^{0}.
				\end{equation*}
			\end{enumerate}
		\end{theorem}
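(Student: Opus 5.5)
We mimic the proof of Theorem~\ref{thm:discrete_curl}, now applying discrete gradient and divergence operators instead of curls. We apply $\nablacp$ to \eqref{eq:T2-disc} and $\divpc$ to \eqref{eq:J2-disc}. In the first case, $\nablacp T_p$ lives at cell centers and obeys
\begin{equation*}
\nablacp T_p^{n+1} = \nablacp T_p^{n} - c_0\Delta t\, \nablacp \divpc \J_c^{n+\half}.
\end{equation*}
In the second case, the compatibility identity \eqref{eq:divcurl}, $\divpc\rotcp\ppsi_p^{n+\half}=0$, removes the coupling with the cleaning field and leaves
\begin{equation*}
\divpc \J_c^{n+1} = \divpc \J_c^{n} - c_0\Delta t\, \divpc\nablacp T_p^{n+\half} - \frac{\Delta t}{\tau}\divpc \J_c^{n+\half}.
\end{equation*}
By linearity of the operators, midpoint averages commute with $\nablacp$ and $\divpc$, so $\nablacp T_p^{n+\half}=\tfrac12(\nablacp T_p^n+\nablacp T_p^{n+1})$, and likewise for $\divpc\J_c^{n+\half}$.

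Next, we dot the first relation with $|\Omega_c|\,\nablacp T_p^{n+\half}$ and sum over $c$. We multiply the second by $|\Omega_p|\,\divpc\J_c^{n+\half}$ and sum over $p$. The Crank--Nicolson identity $(a^{n+1}-a^n)\,\tfrac12(a^{n+1}+a^n)=\tfrac12\big((a^{n+1})^2-(a^n)^2\big)$ turns each left-hand side into the increment of the corresponding half of $\toten_T$. The right-hand sides contain two cross terms, $-c_0\Delta t\sum_c|\Omega_c|\,\nablacp T_p^{n+\half}\cdot\nablacp(\divpc\J_c^{n+\half})$ and $-c_0\Delta t\sum_p|\Omega_p|\,\divpc\J_c^{n+\half}\,\divpc(\nablacp T_p^{n+\half})$, plus the dissipation term $-\frac{\Delta t}{\tau}\sum_p|\Omega_p|\nrm{\divpc\J_c^{n+\half}}^2$.

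The key step is the cancellation of the two cross terms. For this we use the version of Proposition~\ref{prop:Gauss}, identity \eqref{eq:Gauss_divgrad}, with $c$ and $p$ exchanged: for a nodal scalar $\phi_p$ and a cell-centered vector $\A_c$,
\begin{equation*}
\sum_c|\Omega_c|\,\A_c\cdot\nablacp\phi_p+\sum_p|\Omega_p|\,\phi_p\,\divpc\A_c=0.
\end{equation*}
We take $\phi_p=\divpc\J_c^{n+\half}$ and $\A_c=\nablacp T_p^{n+\half}$. The first cross term is then exactly minus the second, so they cancel, and adding the two balances yields \eqref{eq:graddiv_disc_diss} with $\Edissdiv$ as defined, which is nonnegative.

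The main point to check carefully is that the exchanged Gauss identity holds with these sign conventions. It follows from the same summation-exchange argument, using $\bn_{cp}=-\bn_{pc}$ under periodic boundary conditions, exactly as in the proof of Proposition~\ref{prop:Gauss}. Part b) then follows immediately: as $\tau\to\infty$ the dissipation term vanishes, and induction on $n$ gives $\toten_T^n=\toten_T^0$. The analogous argument, without any source term, would apply to the pair $(\nablapc\varphi_c,\divcp\ppsi_p)$, using \eqref{eq:curlgrad} to eliminate $\rotpc\J_c$.
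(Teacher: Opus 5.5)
Your proposal is correct and follows the paper's proof essentially step for step: apply $\nablacp$ to \eqref{eq:T2-disc} and $\divpc$ to \eqref{eq:J2-disc}, with the curl term dropping by \eqref{eq:divcurl}; test with the midpoint values; and cancel the two cross terms with the $c\leftrightarrow p$ exchanged form of \eqref{eq:Gauss_divgrad}, whose sign you checked correctly. The only slip is in your closing aside on the $(\varphi,\ppsi)$ pair, where the term $\divcp\rotpc\J_c$ is removed by \eqref{eq:divcurl}, not \eqref{eq:curlgrad}.
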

		\begin{proof}
			\begin{enumerate}
				\item[a)] Applying $\nablacp$ to~\eqref{eq:T2-disc} and $\divpc$ to~\eqref{eq:J2-disc}, multiplying the resulting 
				equations by $|\Omega_c|\,\nablacp T_p^{n+\half}$ and $|\Omega_p|\,\divpc \J_c^{n+\half}$ respectively, and summing over all cells, one obtains
				\begin{subequations}
					\begin{align}
						\half\sum_{c\in\Omega}|\Omega_c|\nrm{\nablacp T_p^{n+1}}^2 - \half\sum_{c\in\Omega}|\Omega_c| \nrm{\nablacp T_p^{n}}^2 =
						& -c_0\,\Delta t\sum_{c\in\Omega}|\Omega_c|\,	\nablacp T_p^{n+\half}\cdot	\nablacp\prn{\divpc \J_c^{n+\half}},
						\label{eq:gd_1} \\
						\half\sum_{p\in\Omega}|\Omega_p|\nrm{\divpc \J_c^{n+1}}^2- \half\sum_{p\in\Omega}|\Omega_p|	\nrm{\divpc \J_c^{n}}^2
						=& -c_0\,\Delta t\sum_{p\in\Omega}|\Omega_p|\,\divpc \J_c^{n+\half}\divpc\prn{\nablacp T_p^{n+\half}}\nonumber\\
						& - \frac{\Delta t}{\tau}\sum_{p\in\Omega}|\Omega_p|\nrm{\divpc \J_c^{n+\half}}^2.
						\label{eq:gd_2}
					\end{align}
				\end{subequations}	
				Summing~\eqref{eq:gd_1} and~\eqref{eq:gd_2}, the cross terms involving $\nablacp T_p^{n+\half}$ and $\divpc \J_c^{n+\half}$ cancel by~\eqref{eq:Gauss_divgrad}, leaving only the dissipation term and yielding~\eqref{eq:graddiv_disc_diss}.
				\item[b)] In the limit $\tau\to\infty$, the dissipation term in~\eqref{eq:gd_2} vanishes and~\eqref{eq:graddiv_disc_diss} reduces to $\toten_T^{n+1} = \toten_T^{n}$, so that $\toten_T^n = \toten_T^0$ for all $n \geq 0$.
			\end{enumerate}
		\end{proof}
		As a consequence, if $\nablacp T_p^0 = 0$ and $\divpc \J_c^0 = 0$ then, in the reversible case, these involutions are preserved for all times, that is, $\nablacp T_p^n = 0$ and $\divpc \J_c^n = 0$ for all $n\ge 0.$
		
		\subsubsection{Property 5: Discrete wave invariant conservation}
		
		\begin{theorem}
			\label{thm:discrete_wave_inv}
			If periodic boundary conditions are imposed, the discrete wave invariant
			\begin{equation*}
				\toten_{\varphi}^n = \half\sum_{p\in\Omega}|\Omega_p|\nrm{\nablapc \varphi_c^n}^2 + \half\sum_{c\in\Omega}|\Omega_c|\nrm{\divcp \ppsi_p^n}^2
			\end{equation*}
			is exactly conserved by both the reversible scheme~\eqref{eq:CattaneoGLM-disc} and the 
			dissipative scheme~\eqref{eq:CattaneoGLM-disc}, i.e.,
			\begin{equation*}
				\toten_{\varphi}^{n} = \toten_{\varphi}^{0}, \qquad \forall n \geq 0.
			\end{equation*}
			for both the reversible scheme~\eqref{eq:heatGLM-disc} and the dissipative scheme~\eqref{eq:CattaneoGLM-disc}.
		\end{theorem}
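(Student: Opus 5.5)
The argument mirrors the proofs of Theorems~\ref{thm:discrete_curl} and~\ref{thm:discrete_grad_div}, now applied to the pair $(\varphi,\ppsi)$. The point is that the relaxation term only enters~\eqref{eq:J2-disc}. Through~\eqref{eq:psi2-disc}, $\J$ reaches $\ppsi$ only via the term $\rotpc\J_c^{n+\half}$, and this term is annihilated by the discrete divergence. Hence the same computation covers both the reversible scheme~\eqref{eq:heatGLM-disc} and the dissipative scheme~\eqref{eq:CattaneoGLM-disc}.

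First, I apply $\nablapc$ to~\eqref{eq:phi2-disc}, which gives
\begin{equation*}
\nablapc\varphi_c^{n+1}-\nablapc\varphi_c^{n} = -c_h\Delta t\,\nablapc\divcp\ppsi_p^{n+\half}.
\end{equation*}
Next, I apply $\divcp$ to~\eqref{eq:psi2-disc}. The compatibility identity~\eqref{eq:divcurl}, $\divcp\rotpc\J_c^{n+\half}=0$, removes the curl term and leaves
\begin{equation*}
\divcp\ppsi_p^{n+1}-\divcp\ppsi_p^{n} = -c_h\Delta t\,\divcp\nablapc\varphi_c^{n+\half}.
\end{equation*}
This step is exactly where the dependence on $\J$, and therefore on $\tau$, disappears.

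I then take the dot product of the first relation with $|\Omega_p|\,\nablapc\varphi_c^{n+\half}$ and multiply the second by $|\Omega_c|\,\divcp\ppsi_p^{n+\half}$, summing over $p$ and $c$ respectively. By linearity of the operators, $(a^{n+1}-a^n)\cdot a^{n+\half}=\half(\|a^{n+1}\|^2-\|a^n\|^2)$, so the left-hand sides add up to $\toten_\varphi^{n+1}-\toten_\varphi^n$. The right-hand side is
\begin{equation*}
-c_h\Delta t\Big(\sum_p|\Omega_p|\,\nablapc\varphi_c^{n+\half}\cdot\nablapc\divcp\ppsi_p^{n+\half}+\sum_c|\Omega_c|\,\divcp\ppsi_p^{n+\half}\,\divcp\nablapc\varphi_c^{n+\half}\Big).
\end{equation*}

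To show this vanishes, I invoke Proposition~\ref{prop:Gauss}, identity~\eqref{eq:Gauss_divgrad}, with $\A_p=\nablapc\varphi_c^{n+\half}$ and scalar $\phi_c=\divcp\ppsi_p^{n+\half}$. The first sum is $\sum_p|\Omega_p|\,\A_p\cdot\nablapc\phi_c$, and the second is $\sum_c|\Omega_c|\,\phi_c\,\divcp\A_p$. Their sum is zero under periodic boundary conditions, so $\toten_\varphi^{n+1}=\toten_\varphi^n$, and induction gives $\toten_\varphi^n=\toten_\varphi^0$.

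The only step requiring care is matching the operator placements with the exact form of~\eqref{eq:Gauss_divgrad}. The scalar must sit on cells and the vector on vertices, with the gradient acting as $\nablapc$ and the divergence as $\divcp$. This is precisely the configuration stated in Proposition~\ref{prop:Gauss}, so no swapped version is needed.

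Finally, I would remark that the reversible case is the special case $1/\tau=0$. It is in any case covered directly, since $\tau$ never appears in the computation.
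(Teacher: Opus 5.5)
Your proof is correct and follows the paper's argument essentially verbatim. You apply $\nablapc$ to \eqref{eq:phi2-disc} and $\divcp$ to \eqref{eq:psi2-disc}, test against the midpoint quantities, and cancel the cross terms with \eqref{eq:Gauss_divgrad}; your explicit use of $\divcp\rotpc\J_c^{n+\half}=0$ from \eqref{eq:divcurl}, which the paper invokes only tacitly in writing \eqref{eq:wi_2}, is exactly what makes the argument independent of $\tau$.
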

		
		\begin{proof}
			Applying $\nablapc$ to~\eqref{eq:phi2-disc} and $\divcp$ to~\eqref{eq:psi2-disc}, multiplying the resulting equations by $|\Omega_p|\,\nablapc\varphi_c^{n+\half}$ and $|\Omega_c|\,\divcp\ppsi_p^{n+\half}$ respectively, and summing over all cells, one obtains
			\begin{subequations}
				\begin{align}
					\half\sum_{p\in\Omega}|\Omega_p| \nrm{\nablapc\varphi_c^{n+1}}^2 - \half\sum_{p\in\Omega}|\Omega_p| \nrm{\nablapc\varphi_c^{n}}^2
					=& -c_h\,\Delta t\sum_{p\in\Omega}|\Omega_p|\, \nablapc\varphi_c^{n+\half}\cdot
					\nablapc\prn{\divcp\ppsi_p^{n+\half}},
					\label{eq:wi_1} \\
					\half\sum_{c\in\Omega}|\Omega_c| \nrm{\divcp\ppsi_p^{n+1}}^2 - \half\sum_{c\in\Omega}|\Omega_c| \nrm{\divcp\ppsi_p^{n}}^2
					=& -c_h\,\Delta t\sum_{c\in\Omega}|\Omega_c| \,\divcp\ppsi_p^{n+\half} \divcp\prn{\nablapc\varphi_c^{n+\half}}.
					\label{eq:wi_2}
				\end{align}
			\end{subequations}
			Summing~\eqref{eq:wi_1} and~\eqref{eq:wi_2}, the cross terms involving $\nablapc\varphi_c^{n+\half}$ and $\divcp\ppsi_p^{n+\half}$ cancel by~\eqref{eq:Gauss_divgrad}, and since no dissipation term is present, this yields 
			$\toten_\varphi^{n+1} = \toten_\varphi^n$, so that $\toten_\varphi^n = \toten_\varphi^0$ for all $n \geq 0$.
		\end{proof}
		As a consequence, if $\nablapc\varphi_c^0 = 0$ and $\divcp\ppsi_p^0 = 0$, then these involutions are preserved for all times, that is, $\nablapc\varphi_c^n = 0$ and $\divcp\ppsi_p^n = 0$ for all $n \geq 0$.
		
		The five properties established above constitute the discrete versions of the continuous results proved in Sections~\ref{sec:gov_eq} and~\ref{sec:cont_constraints}. Theorem~\ref{thm:AP} guarantees that the scheme captures the correct parabolic limit without any restriction on the time step. Theorems~\ref{thm:discrete_energy}, \ref{thm:discrete_curl}, and \ref{thm:discrete_grad_div} show that the discrete total energy, rotational energy, and grad-div energy mirror exactly the behavior of their continuous counterparts: they are dissipated at a consistent rate in the presence of relaxation, and conserved exactly in its absence. Theorem~\ref{thm:discrete_wave_inv} shows that the discrete wave invariant $E_\varphi^n$ is exactly conserved regardless of if relaxation is present or not, showing that $\varphi$ and $\ppsi$ are entirely decoupled from the dissipation mechanism at both the continuous and discrete 
		levels. In all cases, the curl and grad-div involutions are preserved at the discrete level independently of the mesh size and time step, which is a direct consequence of the compatibility 
		of the discrete operators defined in  Section~\ref{sec:num_discretization}.
		
		\section{Numerical results}\label{sec:num_results}
		\subsection{Convergence analysis for the reversible system}\label{sec:num_results_rev}
		In order to demonstrate that the proposed scheme is second-order accurate, we consider a
		\textit{planar wave} solution, propagating along the direction $\bn = (\cos\theta, \sin\theta)$,
		with $\theta = -\pi/4$. The initial conditions read
		\begin{gather*}
			T(x,y,0) = 0.25\sin \left(\pi (x-y)\right), \qquad \J(x,y,0)  = \J_0 \sin\left( \pi (x-y) \right),	\\
			\ppsi(x, y)  = \ppsi_0 \sin\left( \pi (x-y) \right), \qquad \phi(x,y,0) = 0.5  \sin\left(\pi (x-y)\right), \\ 
			\text{where} \quad 
			\J_0 = \mathbf{R}(\theta)\cdot[0.25,\,0,\,1]\T = [0.25b,\,-0.25b,\,1]\T, \qquad
			\ppsi_0 = \mathbf {R}(\theta)\cdot[0.5,\,1,\,0]\T = [1.5b,\,0.5b,\,0]\T,
		\end{gather*}
		$b=\sqrt2/2$ and $\mathbf{R}(\theta)$ is the rotation matrix of angle $\theta$. The computational domain is $\Omega=[-1,1]^2$ with periodic boundary conditions. The relaxation source term is set to zero here, and the reversible scheme~\eqref{eq:heatGLM-disc} is used throughout. The wave speeds are set to $c_0=1$ and $c_h=1$, so that the solution returns to its initial state at $t = \lambda = \sqrt{2}$, at which time the obtained numerical solution is compared with the initial condition. Since the scheme is unconditionally stable, the time step is chosen purely on accuracy grounds, with a CFL number of $0.9$ and unitary wave speed. We consider a sequence of uniform Cartesian meshes with $N\times N$ computational cells, with $N\in\{ 16, 32, 64, 128\}$.  
		\begin{figure}[htb]
			\centering
			\includegraphics[trim=150 250 150 250,clip,width=0.4\textwidth]{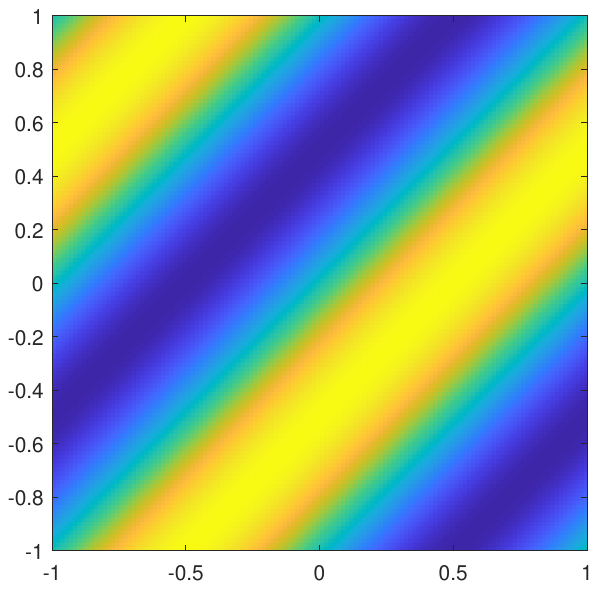}\quad
			\includegraphics[width=0.425\textwidth]{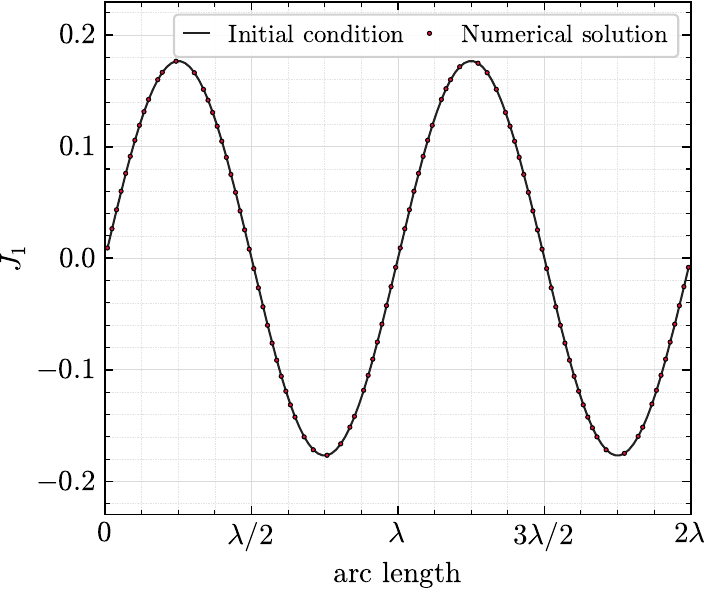}
			\caption{Left: approximate solution at $t=\sqrt{2}$ for $J_1$, reversible system. Right: comparison between the initial condition and the approximate solution along the anti-diagonal cut $y=1-x$.}
			\label{fig:solution_reversible}
		\end{figure}
		Figure~\ref{fig:solution_reversible} shows the first component of the heat flux $J_1$ at the final time (left), and a pointwise comparison between the initial condition and the numerical solution at final time (right), which shows perfect agreement. The $L_2$ errors for the relevant variables are reported in Table \ref{tab:SIMM_heat_err}. The results clearly show second-order convergence for all the variables.  
		\begin{table}[htb]
			\centering
			\renewcommand{\arraystretch}{1.1}
			\begin{tabular}{|l|cccc|ccc|}
				\hline
				&\multicolumn{4}{c|}{$L^2$ errors} & & & \\				
				$N$	&16		&32		&64		&128 & \multicolumn{3}{c|}{Convergence order}\\
				\hline	
				$T$	        &6.71$\times10^{-2}$	&1.71$\times10^{-2}$	&4.28$\times10^{-3}$	&1.07$\times10^{-3}$	&1.98	&1.99	&2.00	\\	
				$J_1$	    &4.74$\times10^{-2}$	&1.21$\times10^{-2}$	&3.03$\times10^{-3}$	&7.58$\times10^{-4}$	&1.98	&1.99	&2.00	\\
				$J_2$	    &4.74$\times10^{-2}$	&1.21$\times10^{-2}$	&3.03$\times10^{-3}$	&7.58$\times10^{-4}$	&1.98	&1.99	&2.00	\\
				$J_3$	    &2.68$\times10^{-1}$	&6.82$\times10^{-2}$	&1.71$\times10^{-2}$	&4.29$\times10^{-3}$	&1.98	&1.99	&2.00	\\
				$\psi_1$	&2.85$\times10^{-1}$	&7.24$\times10^{-2}$	&1.82$\times10^{-2}$	&4.55$\times10^{-3}$	&1.98	&1.99	&2.00	\\
				$\psi_2$	&9.49$\times10^{-2}$	&2.41$\times10^{-2}$	&6.06$\times10^{-3}$	&1.52$\times10^{-3}$	&1.98	&1.99	&2.00	\\
				$\phi$	    &1.34$\times10^{-1}$	&3.41$\times10^{-2}$	&8.57$\times10^{-3}$	&2.14$\times10^{-3}$	&1.98	&1.99	&2.00	\\
				\hline
			\end{tabular}
			\caption{$L^2$ error norms and corresponding convergence order for the planar wave traveling in the direction $\bn = (1,-1)$, obtained with the fully-discrete semi-implicit scheme on uniform grids composed of $N \times N$ elements. }
			\label{tab:SIMM_heat_err}
		\end{table}
		
		Figure~\ref{fig:InvEn_heat} shows the temporal evolution of the errors in the four discrete conserved quantities: the total energy $\toten^n$, the rotational energy $\toten_{\curl}^n$, and the two wave invariants $\toten_T^n$ and $\toten_\varphi^n$. All four quantities are preserved up to machine precision throughout the simulation, confirming simultaneously the conservation results of Theorems~\ref{thm:discrete_energy} -- \ref{thm:discrete_wave_inv}.

		\begin{figure}[htb]
			\centering
			\begin{subfigure}[b]{0.48\textwidth}
				\centering
				\includegraphics[trim=0 0 0 0,clip,width=1.10\textwidth]{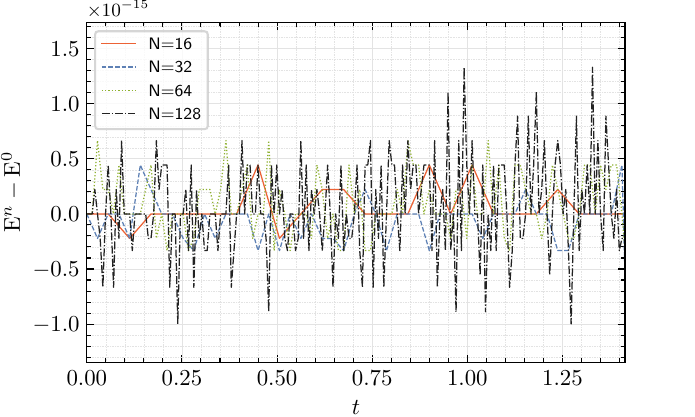}
				\label{fig:en_heat}
			\end{subfigure}
			\hfill
			\begin{subfigure}[b]{0.48\textwidth}
				\centering
				\includegraphics[trim=0 0 0 0,clip,width=1.\textwidth]{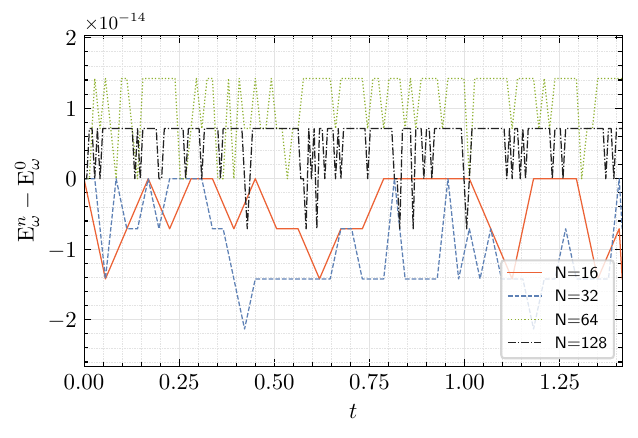}
				\label{fig:curl_en_heat}
			\end{subfigure}
			\\[-1em]
			\begin{subfigure}[b]{0.48\textwidth}
				\centering
				\includegraphics[trim=0 0 0 0,clip,width=\textwidth]{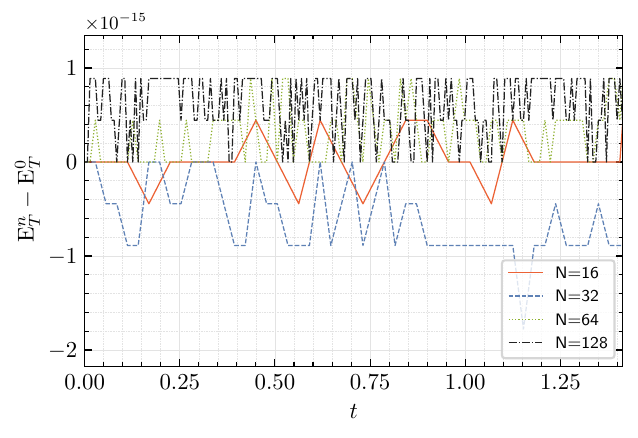}
				\label{fig:graddiv_en_heat}
			\end{subfigure}
			\hfill
			\begin{subfigure}[b]{0.48\textwidth}
				\centering
				\includegraphics[trim=0 0 0 0,clip,width=1.\textwidth]{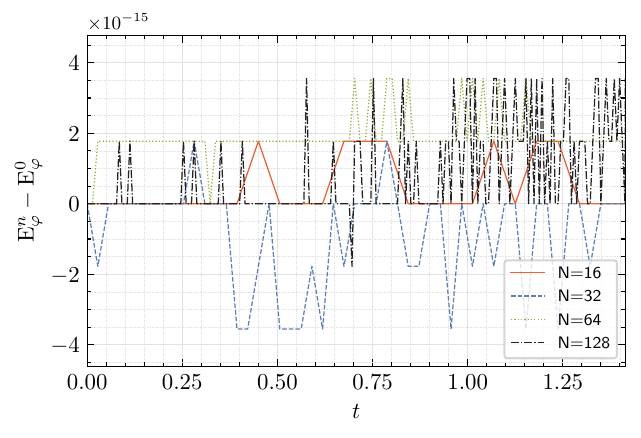}
				\label{fig:graddiv2_en_heat}
			\end{subfigure}
			\caption{Temporal evolution of the discrete conservation errors (top left: total energy $\toten^n - \toten^0$, top right: rotational energy $\toten_{\curl}^n - \toten_{\curl}^0$, bottom left: grad-div energy $\toten_T^n - \toten_T^0$, bottom right: wave invariant $\toten_\varphi^n - \toten_\varphi^0$) for the reversible system, on successively refined uniform $N\times N$ grids. All four quantities are preserved up to machine precision for all resolutions, confirming the discrete conservation results of Theorems~\ref{thm:discrete_energy},~\ref{thm:discrete_curl},~\ref{thm:discrete_grad_div}, and~\ref{thm:discrete_wave_inv}.}
			\label{fig:InvEn_heat}
		\end{figure}
		
		\subsection{Convergence analysis for the irreversible system}
		We conduct here an analogous test as above, but in the presence of relaxation terms. We consider a one-dimensional exact solution, corresponding to a standing decaying mode of the heat subsystem (see Appendix \ref{app:exact_sol} for details), rotated with $\theta=-\pi/4$. The initial data is prescribed as follows
		\begin{gather*}
			T(x,y,0) = \sin\!\left(\pi (x-y) \right), \qquad \J(x,y,0) = \J_0 \cos\!\left( \pi (x-y) \right) , \\
			\ppsi(x,y,0) = \ppsi_0 \sin\!\left( \pi(x-y) \right), \qquad \phi(x,y,0) = 2\sin\!\left( \pi(x-y) \right), \\ 
			\J_0 = \mathbf{R}(\theta)\cdot \left[\frac{-\alpha}{\pi \sqrt2 c_0},0,0\right]\T = \left[\frac{-\alpha}{2\pi  c_0},\frac{\alpha}{2\pi c_0},0\right],  \qquad \ppsi_0 = \mathbf{R}(\theta)\cdot \left[2,0,0\right]\T = \left[\sqrt2,-\sqrt2,0\right]\T,
		\end{gather*}
		where $\alpha = 4\kappa\pi^2\prn{1 + \sqrt{1-8\pi^2\kappa^2/c_0^{2}}}^{-1}$. The exact solution in this case is given by 
		\begin{alignat*}{2}
			&T(x,y,t) = \sin\!\left(\pi (x-y) \right)e^{-\alpha t}, \qquad 	&&\J(x,y,t) = \J_0 \cos\!\left( \pi (x-y) \right)e^{-\alpha t} \\
			&\ppsi(x,y,t) = \ppsi_0 \sin\!\left(\pi (x-y-c_h t) \right), \qquad 	&&\phi(x,y,t) = 2\sin\!\left( \pi (x-y-c_h t) \right).
		\end{alignat*}
		The computational domain is $\Omega=[-1,1]^2$, with periodic boundary conditions. We take $c_0=10$, $c_h=1$, and $\tau=\kappa/c_0^2$ with $\kappa=10^{-2}$, so that $\alpha\in\RR_+$. The time-step is again chosen with a CFL number of $0.9$ and unitary wave speed, independently of the system parameters. The final time is $t_{\mathrm{end}}=\sqrt{2}$, corresponding to one period of the planar wave in the cleaning variables. 
		\begin{figure}[!h]
			\centering
			\includegraphics[trim=150 250 150 250,clip,width=0.4\textwidth]{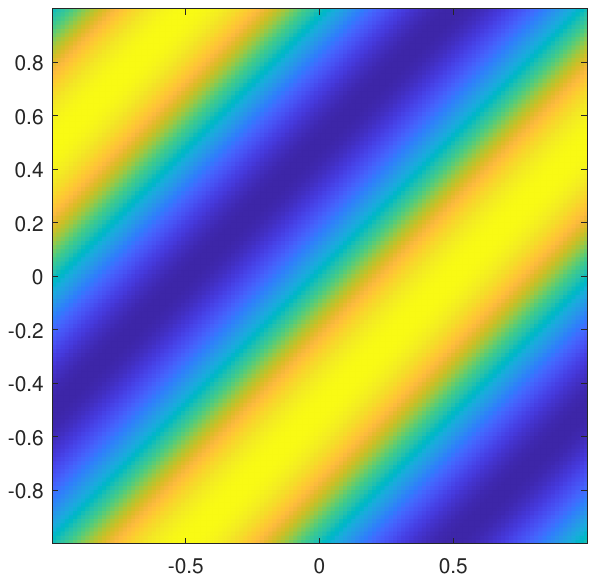}\quad
			\includegraphics[width=0.43\textwidth]{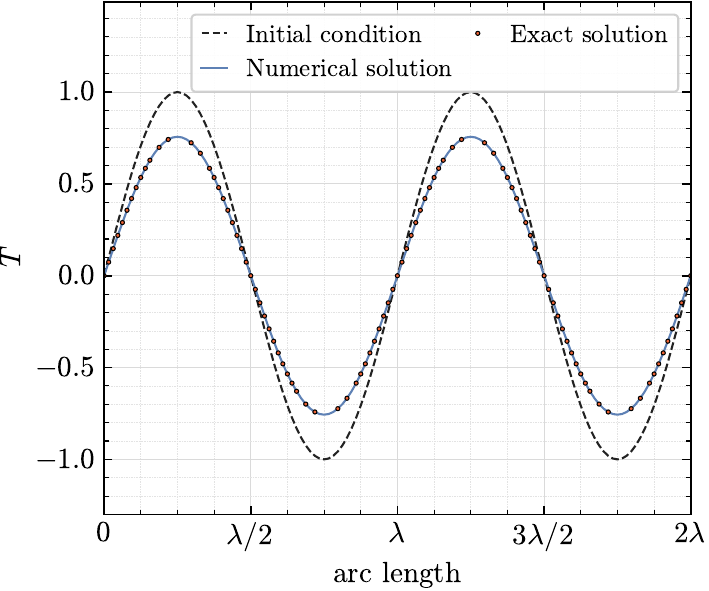}
			\caption{Left: numerical solution at $t=\sqrt{2}$ for $T$, dissipative system. Right: pointwise comparison between the exact and the approximate solutions along the anti-diagonal cut $y=1-x$ at $t=\sqrt{2}$.}
			\label{fig:solution_dissipative}
		\end{figure}
		Figure~\ref{fig:solution_dissipative} shows the temperature field at the final time (left), and a pointwise comparison between the exact and the approximate solutions along the cut along $y=1-x$ (right). The two curves are in excellent agreement, confirming the scheme's accuracy in the dissipative case. A convergence study is performed on a sequence of successively refined Cartesian meshes with $N\in\{16,32,64,128\}$. The corresponding $L^2$ errors are shown in Table~\ref{tab:SIMM_Cattaneo_err} and confirm that the proposed scheme for the relaxed system retains second-order accuracy. 
		
		\begin{table}[htb]
			\centering
			\renewcommand{\arraystretch}{1.1}
			\begin{tabular}{|l|cccc|ccc|}
				\hline
				&\multicolumn{4}{c|}{$L^2$ errors} & \multicolumn{3}{c|}{Convergence order} \\				
				$N$	&16	&32	&64	&128 & & & \\
				\hline		
				$T$	    &1.51$\times10^{-2}$ &3.82$\times10^{-3}$ &9.58$\times10^{-4}$ &2.40$\times10^{-4}$ 
				&1.99 &2.00 &2.00\\
				$J_1$	&5.09$\times10^{-5}$ &2.29$\times10^{-5}$ &2.78$\times10^{-6}$ &5.97$\times10^{-7}$ 
				&1.15 &3.04 &2.22 \\
				$\psi_1$&3.80$\times10^{-1}$ &9.65$\times10^{-2}$ &2.42$\times10^{-2}$ &6.07$\times10^{-3}$ 
				&1.98 &1.99 &2.00 \\
				$\phi$	&5.37$\times10^{-1}$ &1.36$\times10^{-1}$ &3.43$\times10^{-2}$ &8.58$\times10^{-3}$ 
				&1.98 &1.99 &2.00 \\
				\hline	
			\end{tabular}
			\caption{$L^2$ error norms and corresponding convergence orders for the decaying planar wave test case, obtained with the fully-discrete semi-implicit scheme on uniform grids composed of $N \times N$  elements.}
			\label{tab:SIMM_Cattaneo_err}
		\end{table}

		\begin{figure}[!h]
			\centering
			\begin{subfigure}[b]{0.48\textwidth}
				\centering
				\includegraphics[trim=0 0 0 0,clip,width=1.0\textwidth]{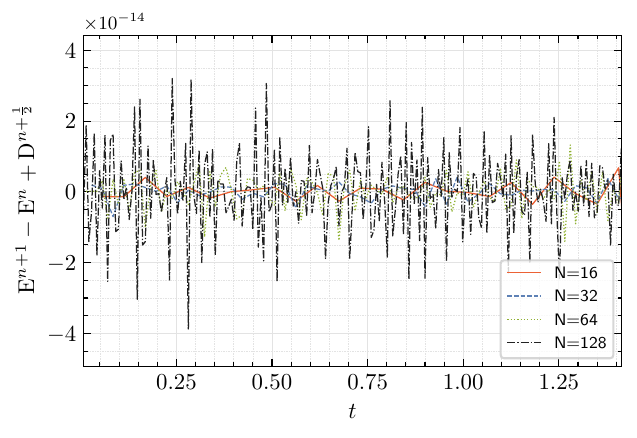}
				\label{fig:en_Cattaneo}
			\end{subfigure}
			\hfill
			\begin{subfigure}[b]{0.48\textwidth}
				\centering
				\includegraphics[trim=0 0 0 0,clip,width=1.\textwidth]{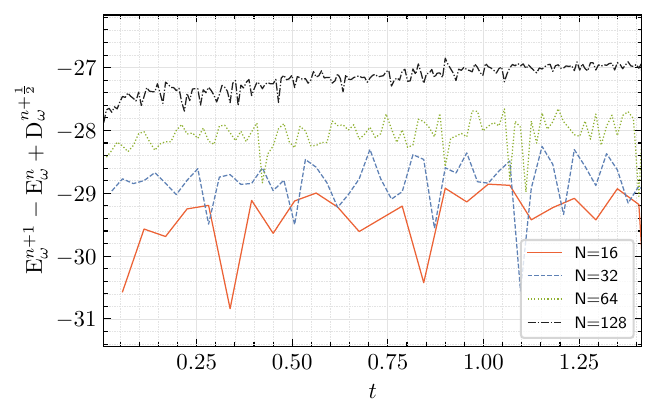}
				\label{fig:curl_en_Cattaneo}
			\end{subfigure}
			\\[-1em]
			\begin{subfigure}[b]{0.48\textwidth}
				\centering
				\includegraphics[trim=0 0 0 0,clip,width=\textwidth]{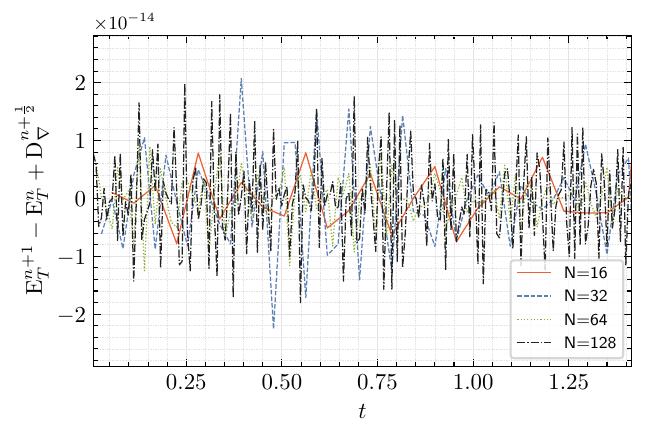}
				\label{fig:graddiv_en_Cattaneo}
			\end{subfigure}
			\hfill
			\begin{subfigure}[b]{0.48\textwidth}
				\centering
				\includegraphics[trim=0 0 0 0,clip,width=1.\textwidth]{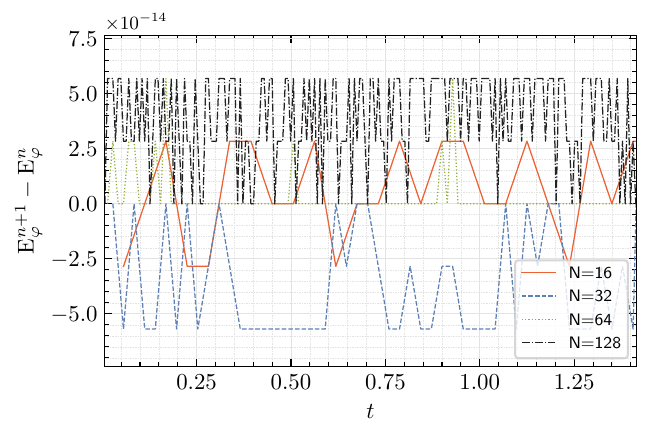}
				\label{fig:graddiv2_en_Cattaneo}
			\end{subfigure}
			\caption{Temporal evolution of the discrete dissipation errors (top left: total energy $\toten^{n+1} - \toten^{n} + \Ediss$, top right: rotational energy $\toten_{\curl}^{n+1}- \toten_{\curl}^{n}+\Edisscurl$ in logarithmic scale, bottom left: grad-div energy $\toten_T^{n+1} - \toten_T^{n}+\Edissdiv$, bottom right: wave invariant $\toten_\varphi^{n+1} - \toten_\varphi^{n}$) for the dissipative system, on successively refined uniform $N\times N$ grids. The four quantities are preserved up to machine precision for all resolutions, confirming the discrete conservation results of Theorems~\ref{thm:discrete_energy},~\ref{thm:discrete_curl},~\ref{thm:discrete_grad_div}, and~\ref{thm:discrete_wave_inv}.}
			\label{fig:InvEn_Cattaneo}
		\end{figure}
		
		Figure~\ref{fig:InvEn_Cattaneo} shows the temporal evolution of the errors in the four discrete quantities: the total energy $\toten^n$, the rotational energy $\toten_{\curl}^n$, and the two wave invariants $\toten_T^n$ and $\toten_\varphi^n$. As proven in Theorems~\ref{thm:discrete_energy}~--~\ref{thm:discrete_wave_inv}, the quantities $\toten^n$, $\toten_{\curl}^n$ and $\toten_T^n$ decay with dissipation rates $\Ediss$, $\Edisscurl$, and $\Edissdiv$, respectively, while $\toten_\varphi^n$ is preserved up to machine precision.

		\subsection{Asymptotic preservation of the Fourier limit}
		Next, we will numerically verify the asymptotic-preserving property established in Theorem~\ref{thm:AP}. The theorem predicts that, for initial data satisfying $\divpc \J_c^0=\mathcal O(c_0^{-m}),$ the discrete solution converges towards the Fourier limit with asymptotic rate $r=\min(2,1+m)$. 
		
		For this problem, the computational domain is set to $\Omega=[-1,1]^2$, periodic boundary conditions are imposed, and the final time is set to $t_{\mathrm{end}}=1$. The mesh is fixed to $50\times 50$ cells, $c_h=1$, $\kappa=10^{-2}$, while the characteristic speed $c_0$ is progressively increased from $10^2$ to $10^5$. The initial data are given by a Gaussian profile, $G(x,y)=\exp \left( -\half \nrm{\x}^2/\sigma^2 \right),$ with $\sigma=0.2$. More specifically, we set 
		\begin{equation*}
			\phi(x,y,0)=G(x,y), \qquad
			\ppsi(x,y,0)=\left(G(x,y),G(x,y),G(x,y)\right)\T, \qquad T(x,y,0)=G(x,y).
		\end{equation*}
		For the field $\J$, we consider an initial field with a scalar prefactor $A=A(c_0)$, such that
		\begin{equation*}
			\J(x,y,0)= A\left( 
			\sin\!\left(-\dfrac{ \nrm{\x}^2}{2\sigma^2}\right),
			\cos\!\left(-\dfrac{ \nrm{\x}^2}{2\sigma^2}\right),
			0 \right)\T,
			\mbox{ with } A\in \left\{ c_0^{1/2},\, 1,\, c_0^{-1/2},\, c_0^{-1},\, c_0^{-2},\, 0 \right\}.
		\end{equation*}
		Theorem~\ref{thm:AP} predicts asymptotic convergence rates $r=\half,1,\frac32,2,2,2$. Additionally, we consider a well-prepared initial condition satisfying the discrete Fourier equilibrium relation
		\begin{equation}
			\J(x,y,0)=-\sqrt{\kappa\tau}\,\nabla T(x,y,0),
			\label{eq:well-prepared-J}
		\end{equation}
		for which also second-order convergence, but an improved error estimate is expected, at least at the continuous level, by means of Lemma~\ref{lmm:Fourier}. For each value of $c_0$, the $L_2$ norm of the residual \eqref{eq:discrete_residual} is  measured and reported alongside the convergence rates in Table~\ref{tab:fourier_limit}. The  numbers are in agreement with the prediction of Theorem~\ref{thm:AP}. In particular, we recover the correct convergence orders, even for divergent initial data $m=-\half$, and saturation is reached at second order.  
		\begin{table}[htbp]
			\centering
			
			\begin{tabular}{l|cccc|ccc}
				\hline
				& $c_0=10^2$ & $c_0=10^3$ & $c_0=10^4$ & $c_0=10^5$
				& \multicolumn{3}{c}{Order} \\
				\hline
				$A=c_0^{1/2}$ 
				& $2.0999{\times}10^{-1}$ & $6.7227{\times}10^{-2}$ & $2.1262{\times}10^{-2}$ & $6.7235{\times}10^{-3}$
				& $0.49$ & $0.50$ & $0.50$ \\
				
				$A=1$
				& $2.0999{\times}10^{-2}$ & $2.1259{\times}10^{-3}$ & $2.1262{\times}10^{-4}$ & $2.1262{\times}10^{-5}$
				& $0.99$ & $1.00$ & $1.00$ \\
				
				$A=c_0^{-1/2}$
				& $2.0999{\times}10^{-3}$ & $6.7227{\times}10^{-5}$ & $2.1262{\times}10^{-6}$ & $6.7235{\times}10^{-8}$
				& $1.49$ & $1.50$ & $1.50$ \\
				
				$A=c_0^{-1}$
				& $2.1000{\times}10^{-4}$ & $2.1259{\times}10^{-6}$ & $2.1262{\times}10^{-8}$ & $2.1262{\times}10^{-10}$
				& $1.99$ & $2.00$ & $2.00$ \\
				
				$A=c_0^{-2}$
				& $2.3161{\times}10^{-6}$ & $1.0118{\times}10^{-8}$ & $9.8954{\times}10^{-11}$ & $9.8931{\times}10^{-13}$
				& $2.36$ & $2.01$ & $2.00$ \\
				
				$A=0$
				& $9.7712{\times}10^{-7}$ & $9.8919{\times}10^{-9}$ & $9.8931{\times}10^{-11}$ & $9.8933{\times}10^{-13}$
				& $1.99$ & $2.00$ & $2.00$ \\
				
				$J^0=-\sqrt{\kappa\tau}\,\nabla T^0$
				& $1.5565{\times}10^{-9}$ & $1.5557{\times}10^{-11}$ & $1.5561{\times}10^{-13}$ & $1.5425{\times}10^{-15}$
				& $2.00$ & $2.00$ & $2.00$ \\
				\hline
			\end{tabular}
			\caption{Asymptotic Fourier limit test. Errors at $t_{\mathrm{end}}=1$ for increasing values of $c_0$, with orders computed for different values of $c_0$.}
			\label{tab:fourier_limit}
		\end{table}
		
		The results of the asymptotic-preserving study are summarized in Figure~\ref{fig:Fourier}. The log--log representation of the $L_2$ error as a function of $c_0$ shows a decay whose slope depends on the scaling of the initial data. For the initial conditions characterized by $A=\mathcal{O}(c_0^{-m})$, the observed asymptotic rates match perfectly the theoretical estimates given by Theorem~\ref{thm:AP}, that is, $r=\min(2,1+m)$. In particular, the cases $A=c_0^{1/2}$, $A=1$, $A=c_0^{-1/2}$ and $A=c_0^{-1}$ exhibit convergence rates equal to $1/2$, $1$, $3/2$ and $2$, respectively. As predicted by the analysis, the convergence is second-order for all the well-prepared initial data, more specifically when $\divpc \J_c^0=\mathcal{O}(c_0^{-1})$ or smaller. This behavior is observed for the cases $A=c_0^{-1}$, $A=c_0^{-2}$, and $A=0$, whose error curves show the same slope. Finally, the initial condition satisfying the discrete Fourier equilibrium relation\eqref{eq:well-prepared-J} produces errors that are several orders of magnitude smaller and approach machine precision for the largest values of $c_0$. 
		
		\begin{figure}[!h]
			\centering
			\includegraphics[trim=0 0 0 0,clip,width=0.7\textwidth]{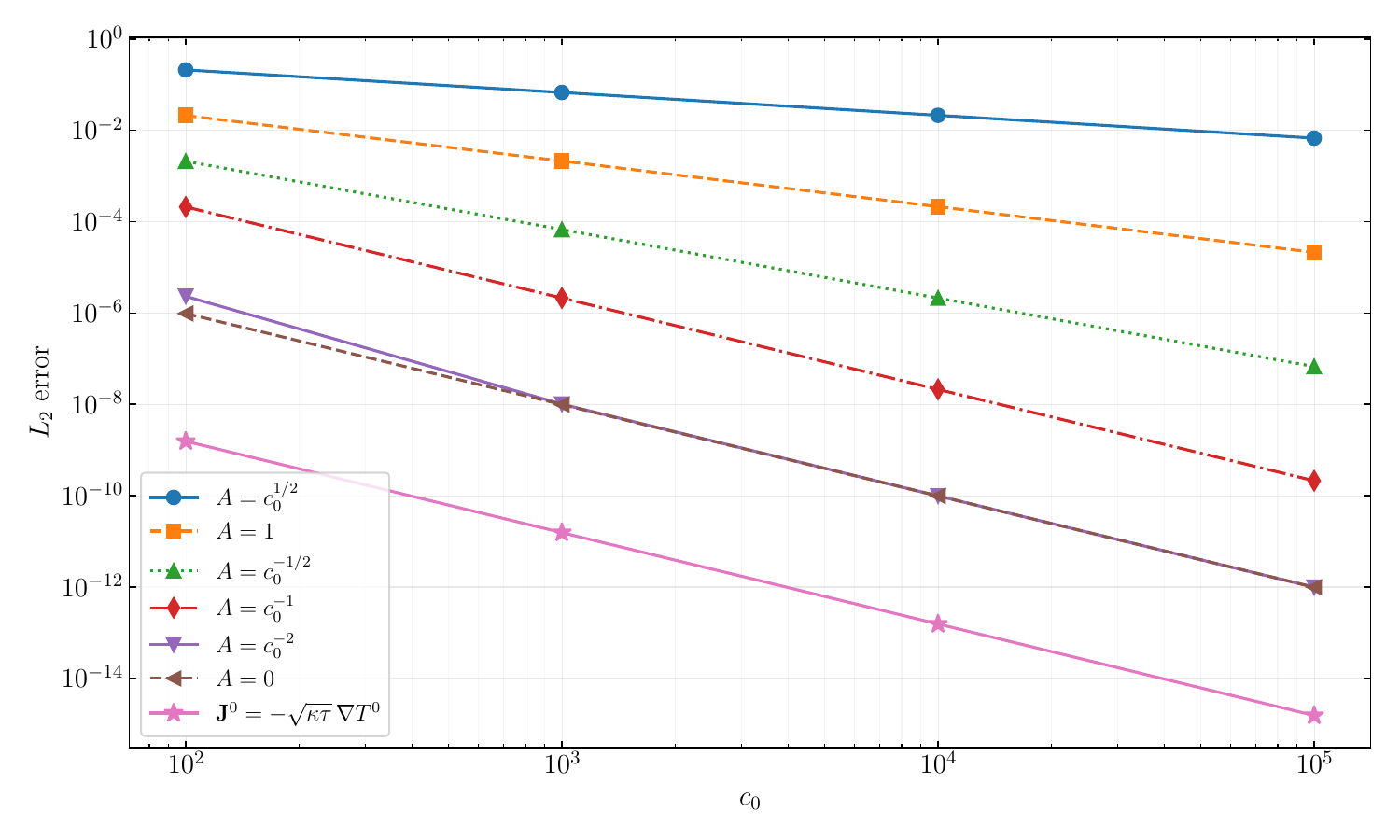}
			\caption{Verification of the asymptotic-preserving property. $L_2$ error with respect to the Fourier limit solution as a function of $c_0$ for different scalings of the initial condition. The observed slopes agree with the theoretical prediction $r=\min(2,1+m)$ of Theorem~\ref{thm:AP}.}
			\label{fig:Fourier}
		\end{figure}
		
		\section{Conclusion and perspectives}
		\label{sec:conclusion}
		
		In this work, we have shed light on an interesting first-order linear symmetric hyperbolic model with a rich mathematical structure: different divergence/curl involutions, several conserved and dissipated quantities, asymptotic consistency with the parabolic heat equation, etc. This set of properties makes the model a worthwhile benchmark for structure-preserving numerical methods, whose development and refinement are still actively investigated.          
		The semi-implicit numerical method we proposed here, based on compatible discrete operators on staggered grids, provably conserves all the mathematical properties of the model. Asymptotic preservation is also rigorously demonstrated, and it yields convergence orders that depend on how the initial data scales with the stiff characteristic speed, which was validated numerically.  
		There are several improvements and extensions that can benefit this work. First, an extension to nonlinear equations and to continuum mechanics would make the model more meaningful at the price of additional complexity. In this regard, we think of general models of the SHTC framework \cite{romenski2007conservative,peshkov2018continuum,peshkov2019continuum}, hyperbolic Euler-Fourier equations \cite{dhaouadi2024eulerian}, hyperbolic Cahn-Hilliard equations \cite{dhaouadi2025first}, etc. A recent contribution extended the numerical approach presented here without source terms to the nonlinear case \cite{lucca2026structure}. A combination of both works may lead to new results.    
		At the numerical level, extending the proven results to more general and complex meshes is of utmost importance to handle more general geometries. Higher-order extensions are also sought for the sake of efficiency.

		\section*{Acknowledgments}
		This research was funded by the Italian Ministry of Education, University and Research (MIUR) in the frame of the Departments of Excellence Initiative 2018--2027 attributed to DICAM of the University of Trento (grant L. 232/2016) and in the frame of the PRIN 2022 project \textit{High order structure-preserving semi-implicit schemes for hyperbolic equations}. 
		FD was also funded by NextGenerationEU, Azione 247 MUR Young Researchers – SoE line. LRM was funded by the European Union's Horizon 2024 Research and Innovation Programme under the Marie Skłodowska-Curie fellowship COPERNICUS, grant agreement No. 101207132. MD is member of the Gruppo Nazionale Calcolo Scientifico-Istituto Nazionale di Alta Matematica (GNCS-INdAM). 
		This research was also co-funded by the European Union NextGenerationEU (PNRR, Spoke 7 CN HPC) and via 
		the European Union’s Horizon 2020 research and innovation programme, Grant agreement No. ERC-ADG-2021-101052956-BEYOND.  
		Views and opinions expressed are however those of the author(s) only and do not necessarily reflect those of the European Union or the European Research Council. Neither the European Union nor the granting authority can be held responsible for them.
		
		\bibliographystyle{mystyle}  
		\bibliography{./references}

		\appendix
		\section{Exact solution for the Cattaneo system in one-dimension}\label{app:exact_sol}
		Recall that the initial value problem for the heat equation in one space dimension, defined on $\RR\times\RR_+$ by 
		\begin{subequations}
			\begin{gather}
				\pdt{\tilde{T}} - \kappa \pd{^2\tilde T}{x^2} =0,\label{eq:Fourier_EQ} \\  
				\tilde T(x,0) = T_0 + T_1\sin\prn{{\omega x}}, \quad \omega\in \RR 
				\label{eq:Fourier_IC}
			\end{gather}
		\end{subequations}
		admits as solution 
		\begin{equation}
			\tilde T(x,t) = T_0 + T_1\sin\prn{\omega x}e^{-\tilde\alpha t},  \quad \text{with} \quad \tilde{\alpha} =  \omega^2\kappa.
			\label{eq:exact_sol_Fourier}
		\end{equation}
		We provide here an analogous exact solution for the heat subsystem (\ref{eq:T2}-\ref{eq:J2}), which in one dimension reduces to the classical Cattaneo-Vernotte system, owing to the vanishing of curl terms
		\begin{subequations}
			\begin{align}
				&\pdt{T} + c_0\,\pd{J}{x} =0, \label{eq:Cattaneo_1}\\
				&\pdt{J} +c_0\,T_x = -\frac{c_0^2}{\kappa}J \label{eq:Cattaneo_2}
			\end{align}
			\label{eq:IVP_hyp}
		\end{subequations}
		We are particularly interested in the case where the initial datum for the temperature is taken as in \eqref{eq:Fourier_IC} and we look for an exact solution for the temperature of the form  
		\begin{equation*}
			T(x,t) = T_0 + T_1\sin\prn{\omega x}e^{-\alpha t},
		\end{equation*}
		where $\alpha > 0$ is to be determined. Inserting this ansatz into equation \eqref{eq:Cattaneo_1} and integrating over space yields $J$ up to a time-dependent integration constant $g(t)$
		\begin{equation*}
			J(x,t) = -\frac{\alpha}{\omega c_0} T_1\cos\prn{\omega x}e^{-\alpha t} + g(t).
		\end{equation*}
		Substituting both expression in \eqref{eq:Cattaneo_2} yields
		\begin{equation*}
			\prn{\alpha^2- \frac{c_0^2}{\kappa}\alpha + \omega^2 c_0^2 } T_1\cos\prn{\omega x}e^{-\alpha t}  =  -\omega c_0\prn{f'(t) + \frac{c_0^2}{\kappa} f(t)}.
		\end{equation*}
		Since the left-hand side is proportional to $\cos(\omega x)$, while the right-hand side is only time-dependent, this equality can only hold identically if both sides are set to zero i.e.  
		\begin{equation*}
			\alpha^2- \frac{c_0^2}{\kappa}\alpha + \omega^2 c_0^2 = 0, \quad \text{and} \quad g'(t) + \frac{c_0^2}{\kappa} g(t) = 0,
		\end{equation*} 
		and which gives, after excluding unstable modes
		\begin{equation*}
			\alpha = \frac{2 \tilde \alpha}{1 + \sqrt{1-4\omega^2\kappa^2/c_0^{2}}}, \qquad g(t) = J_0\,e^{-tc_0^2/\kappa}, \quad J_0\in\RR
		\end{equation*}
		Therefore, one obtains the following solution 
		\begin{align*}
			& T(x,t) = T_1\sin\prn{\omega x}e^{-\alpha t} + T_0, \\
			& J(x,t) = J_1\cos\prn{\omega x}e^{-\alpha t} + J_0\,e^{-tc_0^2/\kappa}, 
		\end{align*}
		where $J_1 = -\alpha T_1/(\omega c_0)$. 
		Note that in the limit $c_0\to+\infty$, $\alpha\to\tilde \alpha$ and hence $T(x,t)\to\tilde T(x,t)$.

	\end{document}